\documentclass[11 pt]{amsart}
\usepackage{amssymb}
\usepackage{amsmath}
\usepackage{amsthm}
\usepackage{amsfonts}
\usepackage{standalone}

\usepackage[margin=1 in]{geometry}
\parindent=10pt
\parskip=3 pt
\linespread{1.11}

\usepackage{multicol}
\usepackage{tikz}
\usetikzlibrary{patterns}
\usepackage{lmodern,tikz}
\usetikzlibrary{positioning}
\usepackage{pgfplots,caption}
\pgfplotsset{compat=1.13}
\usepackage[utf8]{inputenc}
\usepackage{graphicx}
\usepackage{verbatim}
\usepackage{enumerate}
\usepackage{mathtools}
\usepackage{xcolor}
\usetikzlibrary{calc}
\captionsetup[figure]{font=scriptsize}
\usepackage{float}


\usepackage{mathpazo}
\usepackage{euler}
\usepackage{microtype}

\DeclarePairedDelimiter\abs{\lvert}{\rvert}%
\DeclarePairedDelimiter\norm{\lVert}{\rVert}%

\makeatletter
\let\oldabs\abs
\def\abs{\@ifstar{\oldabs}{\oldabs*}}
\let\oldnorm\norm
\def\norm{\@ifstar{\oldnorm}{\oldnorm*}}
\makeatother

\usepackage{amsmath,stackengine,scalerel}
\stackMath
\def\dhat#1{\ThisStyle{\setbox0=\hbox{$\SavedStyle#1$}%
  \stackengine{0pt}{\SavedStyle#1}{\SavedStyle\hspace{.4\ht2}%
  \hat{\vphantom{#1}}\kern\dimexpr3.2\LMpt+2.0pt\relax\hat{\vphantom{#1}}}{O}{c}{F}{T}{L}}%
}

\tikzset{
    triple/.style args={[#1] in [#2] in [#3]}{
        #1,preaction={preaction={draw,#3},draw,#2}
    }
}

\DeclareMathAlphabet{\mathcal}{OMS}{cmsy}{m}{n}
\newcommand{\F}{\mathcal{F}}
\newcommand{\R}{\mathcal{R}}

\newcommand{\M}{\mathcal{M}}
\newcommand{\G}{G^{A,B}}
\newcommand{\T}{\mathcal{T}}

\newcommand{\GG}{\G_{\text{red}}}

\newcommand{\red}{\G_{\text{red}}}
\newcommand{\ol}{\overline}

\captionsetup{belowskip=-10pt,justification=raggedright}

\usepackage{xpatch}
\makeatletter
\xpatchcmd{\@thm}{\thm@headpunct{.}}{\thm@headpunct{}}{}{}
\makeatother

\newtheorem{lemma}[subsection]{Lemma}
\newtheorem{theorem}[subsection]{Theorem}
\newtheorem{question}[subsection]{Question}
\newtheorem{conjecture}[subsection]{Conjecture}
\theoremstyle{definition}
\newtheorem{definition}[subsection]{Definition}
\newtheorem{corollary}[subsection]{Corollary}
\newtheorem{remark}[subsection]{Remark}
\newtheorem{example}[subsection]{Example}
\newtheoremstyle{case}{}{}{}{}{}{:}{ }{}
\theoremstyle{case}

\title{Biconed Graphs, weighted forests, and $h$-vectors of matroid complexes }
\author[P. Cranford]{Preston Cranford}
\author[A. Dochtermann]{Anton Dochtermann}
\author[E. Haithcock]{Evan Haithcock}
\author[J. Marsh]{Joshua Marsh}
\author[S. Oh]{Suho Oh}
\author[A. Truman]{Anna Truman}

\address{Massachusetts Institute of Technology, Department of Mathematics}
\email{prestonc@mit.edu}

\address{Texas State University, Department of Mathematics}
\email{dochtermann@txstate.edu}

\address{Clemson University, School of Mathematical and Statistical Sciences}
\email{ehaithc@g.clemson.edu}

\address{The University of Texas at Dallas, Department of Mathematical Sciences}
\email{joshuawmarsh@gmail.com}

\address{Texas State University, Department of Mathematics}
\email{s\_o79@txstate.edu}

\address{Grove City College, Department of Mathematics}
\email{actruman@icloud.com}

\date{\today}

\begin{document}


\begin{abstract}
A well-known conjecture of Richard Stanley posits that the $h$-vector of the independence complex of a matroid is a pure ${\mathcal O}$-sequence. The conjecture has been established for various classes but is open for graphic matroids. A biconed graph is a graph with two specified `coning vertices', such that every vertex of the graph is connected to at least one coning vertex. The class of biconed graphs includes coned graphs, Ferrers graphs, and complete multipartite graphs.  We study the $h$-vectors of graphic matroids arising from biconed graphs, providing a combinatorial interpretation of their entries in terms of `$2$-weighted forests' of the underlying graph. This generalizes constructions of Kook and Lee who studied the M\"obius coinvariant (the last nonzero entry of the $h$-vector) of graphic matroids of complete bipartite graphs. We show that allowing for partially $2$-weighted forests gives rise to a pure multicomplex whose face count recovers the $h$-vector, establishing Stanley's conjecture for this class of matroids.  We also discuss how our constructions relate to a combinatorial strengthening of Stanley's Conjecture (due to Klee and Samper) for this class of matroids.

\end{abstract}

\maketitle


\section{Introduction}

A matroid is a combinatorial structure that generalizes various notions of independence that arise in linear algebra, field extensions, graph theory, matching theory, and other areas. A \emph{graphic matroid} ${\mathcal M}(G)$ has its ground set given by the edge set of some finite connected graph $G$, with independent sets given by the sets of edges that do contain a cycle. Given a matroid ${\mathcal M}$, of particular interest is the number of independent sets of ${\mathcal M}$ of a certain size. The $h$-vector of ${\mathcal M}$ encodes this information in a convenient format. The $h$-vector of a matroid provides topological information regarding its underlying simplicial complex and also relates to the notion of \emph{activity} of bases. 

In his work surrounding the Upper Bound Conjecture \cite{Stanley1977}, Stanley proved that if a simplicial complex is Cohen-Macaulay (an algebraic condition on its associated face ring) then its $h$-vector is necessarily an \emph{${\mathcal O}$-sequence}: the entries $h_i$ are given by the number of degree $i$ monomials in some order ideal (see Section \ref{sec:Prelim} for details).  Motivated by these results and the orderly structure of matroids (a type of Cohen-Macaulay simplicial complex), Stanley conjectured \cite{Stanley1977} that the $h$-vectors of matroids satisfy a stronger condition.

\begin{conjecture}
The $h$-vector of a matroid is a pure ${\mathcal O}$-sequence.
\end{conjecture}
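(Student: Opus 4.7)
The plan is to exploit the activity interpretation of the $h$-vector due to Bj\"orner and Las Vergnas: for any linear order on the ground set $E$, the coefficient $h_i$ counts bases of internal activity zero whose external activity equals a specified value. For each such basis $B$, one would associate a monomial $m_B$ supported on the externally passive elements of $B$ (those not in $B$ yet smaller than the minimum element of the unique circuit they create with $B$), with the degree of $m_B$ recording the number of externally passive elements. The goal is to prove that $\mathcal{I} = \{m : m \mid m_B \text{ for some basis } B\}$ is a pure multicomplex (equivalently, an order ideal of monomials whose maximal elements all have the same degree), and that its face count recovers $(h_0,\dots,h_r)$.

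First I would verify that $\#\{B : m_B = m\}$ gives the desired count, which follows from the Tutte polynomial formula. Next I would establish purity: that every maximal monomial in $\mathcal{I}$ has degree $r = \mathrm{rank}(\M)$, using the fact that each basis contributes a monomial bounded by its total external activity. The crucial step is the order ideal property: whenever $u \mid m_B$, produce an explicit basis $B'$ with $m_{B'} = u$. I would attempt this via a \emph{basis exchange argument}: given $B$ and an element $e$ appearing in $m_B$ but not in $u$, swap $e$ with an appropriate internally passive element of $B$ to construct $B'$ with strictly smaller associated monomial. To keep the construction uniform I would then proceed class by class, handling representable matroids via their broken circuit complexes, transversal matroids via their bipartite presentations, and graphic matroids via the edge-rooted forest multicomplex developed in the body of this paper, checking in each setting that the exchange respects activities.

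The hard part will be producing a \emph{canonical} ordering (or sequence of orderings) under which the exchange argument closes. For specific classes, tailored orderings are available: e.g.\ the coning order for biconed graphs, the colex order for rank-three matroids, the lex order adapted to cotransversal presentations. But no such ordering is known to work for arbitrary matroids, and explicit small examples show that naively chosen orderings fail the order-ideal property. A complete proof would therefore require either (i) a structural decomposition reducing every matroid to a minor-closed class where a good ordering is known, or (ii) a non-constructive existence proof drawing on the Hodge-theoretic developments of Adiprasito--Huh--Katz and the Chow ring of a matroid, where the log-concavity of $h$-vectors suggests that a pure multicomplex structure may be extractable from the Koszul data of the augmented Chow ring. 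Given the long-standing openness of the conjecture, I expect the primary obstacle to be the absence of any uniform combinatorial certificate, and the contribution of the present paper should be viewed as one input into such a program rather than a template for the general proof.
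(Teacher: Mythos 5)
There is no proof to compare against: the statement you were asked to address is Stanley's conjecture itself, which the paper explicitly leaves open (``the conjecture remains mostly wide open today'') and only establishes for the special case of graphic matroids of biconed graphs (Corollary \ref{cor:Stanley}). Your proposal is a research program with self-acknowledged gaps, not a proof, and you are right that it cannot currently be closed. The honest assessment is that steps (i) and (ii) of your concluding paragraph are precisely where the mathematics does not exist: no canonical ordering or exchange argument is known to make the order-ideal property hold for arbitrary matroids, and the Hodge-theoretic machinery of Adiprasito--Huh--Katz yields log-concavity of characteristic-polynomial coefficients, which is a much weaker statement than the existence of a pure multicomplex realizing the $h$-vector. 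Class-by-class verification, as you suggest in your middle paragraph, is exactly what the literature (and this paper) does, but it does not assemble into a proof of the general conjecture.

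There is also a concrete technical error in your setup. You write that $h_i$ ``counts bases of internal activity zero whose external activity equals a specified value.'' This is not the activity interpretation used here or in Bj\"orner's treatment: by Lemma \ref{h-vector}, $h_i$ is the number of bases with exactly $i$ \emph{internally passive} elements, with no constraint on external activity. Only the top entry $h_d$ counts bases of internal activity zero (this is the M\"obius coinvariant). Correspondingly, the monomial attached to a basis should be supported on data tracking the internally passive elements --- in this paper, the edge roots of the associated $2$-edge-rooted forest, whose number equals the internal passivity by Lemma \ref{lem:activity} --- not on externally passive elements. If you rebuild your program on the externally-passive side you are implicitly working with the dual matroid, which changes which special cases are known (e.g.\ Merino's result covers cographic, not graphic, matroids).
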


Here an ${\mathcal O}$-sequence is \emph{pure} if the maximal elements of the underlying order ideal can be chosen to all have the same degree; again we refer to Section \ref{sec:Prelim} for details. Despite receiving considerable attention for over four decades, Stanley's conjecture remains mostly wide open today.  It has been established for some specific classes of matroids, in particular for cographic matroids by Merino in \cite{Merino2001}, lattice-path matroids by Schweig in \cite{Schweig2010}, cotransversal matroids by Oh in \cite{Oh2013} (see also work of Sarmiento \cite{Sar}), paving matroids by Merino, Noble, Ramirez-Ibanez, and Villarroel-Flores  \cite{MNRV2012}, and internally perfect matroids by Dall in \cite{Dal}.  The conjecture has also been shown to hold for small rank and corank, in particular rank $3$ matroids by H\'a, Stokes, and Zanello in \cite{HaStokesZanello2013}, rank $3$ and corank $2$ matroids by DeLoera, Kemper, and Klee in \cite{DelKemKle}, rank $4$ matroids by Klee and Samper in \cite{KleeSamper2015}, and rank $d$ matroids with $h_d \leq 5$  by Constantinescu, Kahle, and Varbaro in \cite{ConKahVar}. Stronger forms of the conjecture have been established for some specific classes of matroids, in particular for rank $3$ matroids in \cite{KleNug} by Klee and Nugent, and for rank $3$ and $4$ matroids in \cite{KleeSamper2015}. 





In \cite{Kook2012} Kook established Stanley's conjecture for the graphic matroid of a \emph{coned} graph, by definition a graph $\hat G = G \ast \{v\}$ obtained from connecting a vertex $v$ to every vertex of an arbitrary finite graph $G$.  Kook proved the conjecture by explicitly constructing a multicomplex of `partially edge-rooted forests' in $G$.  A spanning tree $T$ of $\hat G$ corresponds to a partially edge-rooted forest of $G$ in such a way that the number of internally passive edges in $T$ is given by the cardinality of edges and edge roots in its corresponding partially edge-rooted forest.  

In \cite{KookLee2018} Kook and Lee studied the $h$-vectors of complete bipartite graphs $K_{m+1, n+1}$ and provided a combinatorial interpretation for their M\"obius coinvariant $\mu^\perp(K_{m+1,n+1})$, which can be seen to coincide with the last nonzero entry of the $h$-vector of the underlying matroid. They showed that the set of such trees correspond to certain edge-rooted forests of the subgraph $K_{m,n}$.  These constructions provide bijective combinatorial proofs for the formulas for $\mu^\perp(K_{m+1})$ and $\mu^\perp(K_{m+1,n+1})$ previously established by Novik, Postnikov, and Sturmfels in \cite{NovPosStu2002}.

In this paper we study $h$-vectors of biconed graphs. By definition a \emph{biconed graph} $\G$ has a pair of vertices $0$ and $\ol{0}$ such that every vertex in $\G$ is adjacent to one of $0$ or $\ol{0}$ (or both).  Loops and some, but not all, parallel edges are admissible (see Definition \ref{biconing} for a precise statement, and in particular the meaning of $A$ and $B$).  The class of biconed graphs includes coned graphs, complete multipartite graphs, and Ferrers graphs.

In the concluding section of \cite{KookLee2018} the authors suggest biconed graphs as a class of graphs for which their `edge-rooted forests' may naturally generalize. In this paper we confirm this, showing that the set of completely passive spanning trees of a biconed graph $\G$ is in correspondence with the collection of maximal `2-weighted forests' of $G^{A,B}_{\text{red}}$, a certain `reduced' subgraph of $\G$. 

Furthermore, we show that by allowing for partially weighted forests, this construction gives rise to a notion of `degree' (in terms of the number of edge roots) in such a way that that the number of internally passive edges in a spanning tree of $\G$ is given by the degree in the corresponding partially 2-weighted forest. Our main results can be summarized as follows.  We refer to later sections for technical definitions.

\begin{theorem}[Corollary \ref{cor:mainbijection}, Lemma \ref{lem:activity}]
Suppose $\G$ is a biconed graph with $h$-vector $(h_0, h_1, \dots, h_d)$. Then $h_i$ is given by the number of partially $2$-weighted forests in $G^{A,B}_{\text{red}}$ of degree $i$.
\end{theorem}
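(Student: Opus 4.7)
The plan is to combine two standard inputs: (1) Björner's activity interpretation of the $h$-vector of a matroid, which after fixing a linear order on the edge set expresses $h_i$ as the number of spanning trees of $\G$ with a prescribed number of internally active (equivalently, internally passive) edges, and (2) a bijection between spanning trees of $\G$ and partially $2$-edge-rooted forests of $\GG$ that carries this activity statistic onto the degree statistic. The theorem then follows by stitching together Corollary \ref{cor:mainbijection}, which handles the bijection at the extremal (completely passive) level, with Lemma \ref{lem:activity}, which extends the correspondence to partial rootings while matching degree with internal activity.

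Concretely, I would begin by fixing an edge ordering on $\G$ in which the edges incident to the two coning vertices $0$ and $\ol{0}$ come first, so that every non-coning vertex of any spanning tree $T$ of $\G$ is attached to $\{0,\ol{0}\}$ via some initial edge, and the remaining edges of $T$ live in $\GG$. The map $T \mapsto (F,\mathbf{m})$ then takes $F$ to be the edges $T \cap \GG$ together with distinguished ``edge root'' markers $\mathbf{m}$ recording the attachments of each component of $T\cap\GG$ to $\{0,\ol{0}\}$; the ``partial'' nature of the rooting corresponds precisely to dropping markers on those attachment edges of $T$ that are internally active, so that only the internally passive ones contribute to the degree. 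By construction, the degree of $(F,\mathbf{m})$ then equals the number of internally passive edges of $T$, and summing over spanning trees stratified by this statistic recovers the claimed expression for $h_i$.

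The main obstacle will be proving that this map is a well-defined bijection and that the degree-to-activity match is exact in the biconed setting. Two complications absent from the coned case of Kook arise: vertices of $\G$ adjacent to both $0$ and $\ol{0}$ (i.e., those in $A \cap B$) create genuine ambiguity in the attachment recording and force the identification of certain parallel edges performed by the reduction $\GG$; and fundamental cocircuits of edges incident to $\ol{0}$ can straddle the two sides of the bicone, so the activity analysis must track both coning vertices simultaneously through a carefully designed order. Verifying that the activity of each edge of $T$ is correctly detected by the presence or absence of the corresponding marker in $(F,\mathbf{m})$ — and that this encoding is reversible — is precisely what Lemma \ref{lem:activity} must deliver, and is the step where the biconed structure genuinely strains the one-cone argument.
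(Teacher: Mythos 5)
Your overall strategy --- Bj\"orner's activity formula for $h_i$ combined with a bijection from spanning trees to partially rooted forests that converts internal passivity into degree --- is exactly the paper's, and your choice of edge order (coning edges first) matches the paper's lexicographic order induced by $0 < \ol{0} < 1 < \cdots$. But the proposal stops where the actual content begins, and it also misassigns the roles of the two cited results: Corollary \ref{cor:mainbijection} is the full bijection $\phi = \phi_2 \circ \phi_1$ between \emph{all} spanning trees and \emph{all} $2$-edge-rooted forests (not just the completely passive, maximal-degree stratum), while Lemma \ref{lem:activity} is the separate statement that $\phi$ sends a tree with $i$ internally passive edges to a monomial of degree $i$.

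The concrete gaps are these. First, ``recording the attachments by markers and dropping the markers on internally active attachment edges'' is not yet a well-defined, reversible map: you need the rule that makes it one. The paper's mechanism is Remark \ref{rem:active}: a connecting edge $0a$ or $\ol{0}b$ of $T$ is internally active precisely when its connecting vertex is the \emph{smallest} vertex of its component of $T \setminus T_0$. This is what lets vertex-root data be encoded losslessly in edge roots: in a singly rooted component one $1$-roots the first edge on the path from the connecting vertex to the smallest vertex (no root if they coincide), and in a birooted component one roots the first and last edges of the path between the two connecting vertices, with condition $(C3)$ (an odd number of bridging edges on that path) guaranteeing that the two recovered roots land one in $A$ and one in $\ol{A} \cup \{\ol{0}\}$. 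Without $(C3)$ or an equivalent device your reversibility claim fails. Second, the degree count needs the observation that \emph{every} edge of $T \setminus T_0$ is internally passive (exchange against the lex-minimal tree $T_0$), so the base multiplicities of the monomial account for those edges, while each passive connecting edge contributes exactly one extra edge root; in the birooted case the two connecting edges are passive because each exchanges with $0\ol{0} \notin T$. Finally, your worry that vertices in $A \cap B$ force ``identification of parallel edges'' in the reduction is off target: $\GG$ is simply $\G$ with the edges of $T_0$ and the vertex $0$ deleted, so for $w \in A \cap B$ the edge $\ol{0}w$ survives into $\GG$ as an ordinary edge and no identification occurs.
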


We let $\F(\red)$ denote the set of partially 2-weighted forests in $G = A \cup B$.   The set $\F(\red)$ has a pleasing combinatorial structure, as our next result indicates.

\begin{theorem}[Lemma \ref{lem:multi}, Lemma \ref{lem:pure}]
For any biconed graph $\G$ the set $\F(\red)$ is a pure multicomplex on the set of edges of $G^{A,B}_{\text{red}}$.
\end{theorem}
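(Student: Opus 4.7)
The statement bundles two claims: that $\F(\red)$ is closed under taking sub-multisets (the multicomplex property, Lemma \ref{lem:multi}), and that all inclusion-maximal elements have the same total degree (purity, Lemma \ref{lem:pure}). The plan is to prove these in turn.

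For the multicomplex property, I would start from an arbitrary partially $2$-edge-rooted forest $(F,\mathbf{m}) \in \F(\red)$ and observe that every sub-multiset of $(F,\mathbf{m})$ can be obtained by iterating two elementary operations: (i) removing a single root marking from a rooted edge, and (ii) deleting an edge together with any remaining root markings at that edge. It therefore suffices to verify that each such elementary operation preserves membership in $\F(\red)$. The underlying forest condition is trivially preserved under edge deletion, so the content of the claim is that the rooting conditions imposed on a partially $2$-edge-rooted forest are local, in the sense that each edge's admissible root count and admissible ``side'' ($0$ versus $\ol 0$) depend only on the local structure of the edge, not on the global configuration. A direct case check against the definition should verify this.

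For purity, I would invoke the bijection of Corollary \ref{cor:mainbijection}, which identifies maximal elements of $\F(\red)$ with completely passive spanning trees of $\G$, together with Lemma \ref{lem:activity} equating the degree of a partially $2$-edge-rooted forest with the number of internally passive edges in its associated spanning tree. Every spanning tree of the connected graph $\G$ has exactly $d = |V(\G)|-1$ edges, and a completely passive tree is one in which all $d$ of them are internally passive. Hence the degree of the corresponding maximal element of $\F(\red)$ is exactly $d$, independent of the choice of tree, and purity follows.

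The main obstacle is a subtle issue embedded in the purity argument: the bijection of Corollary \ref{cor:mainbijection} produces maximal elements arising from completely passive spanning trees, but one must also verify that these are \emph{all} the inclusion-maximal elements of $\F(\red)$ in the multicomplex sense, ruling out the possibility of ``local maxima'' of strictly lower degree. This amounts to showing that any partially $2$-edge-rooted forest of degree strictly less than $d$ admits at least one elementary extension (an added edge or an added root marking) producing another element of $\F(\red)$. Establishing this extendability requires a constructive greedy argument tailored to the combinatorics of biconed graphs and the interplay between the two coning vertices $0$ and $\ol 0$, and this is where the real technical work of the proof will concentrate.
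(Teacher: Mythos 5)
Your reduction of the multicomplex half to two elementary deletions (remove one root marking, or remove an edge) matches the paper's strategy, but the justification you give is not right: the rooting conditions are \emph{not} local to an edge. Condition $(C3)$ of Definition~\ref{def:2erf} is a global parity constraint on the number of bridging edges along the path joining the two edge roots (or joining the edge root to $\ol 0$) of the unique $2$-edge-rooted component. The case check still succeeds, but for a different reason than locality: each elementary deletion either destroys the $2$-edge-rooted component (so $(C3)$ becomes vacuous), splits it into $0$- or $1$-edge-rooted pieces, or leaves the relevant path --- and hence its bridging-edge count --- untouched. You would need to argue along these lines rather than assert that admissibility of a root ``depends only on the local structure of the edge.''

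The purity half has a genuine gap, and you have located it yourself: the extendability claim that you defer as ``where the real technical work of the proof will concentrate'' \emph{is} the entire content of Lemma~\ref{lem:pure}, so deferring it leaves no proof. The paper first characterizes the top-degree elements (every component of $\GG$ is either singular in $\GG$ or is $1$- or $2$-edge-rooted, with a $2$-edge-rooted component present whenever $\GG$ contains a bridging edge) and then exhibits an explicit extension of any $\Fm$ failing this: root an edge of an unrooted non-singular component; attach an edge of $\GG$ to any isolated vertex of $F$ that is not isolated in $\GG$; and, in the remaining case, add or root a bridging edge chosen so that the path from it to the existing root (or to $\ol 0$) contains no other bridging edge, which is exactly what keeps $(C3)$ intact. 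Separately, your assertion that every facet has degree $d = |V(\G)|-1$ is false in general: if some vertex of $G$ is isolated in $G$ and adjacent to only one coning vertex, then $\G$ has a coloop, no completely passive spanning tree exists, and the common facet degree is strictly less than $d$. Purity must therefore be established by the extension argument, not by counting the edges of a spanning tree.
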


From these we obtain our main result.

\begin{theorem}[Corollary \ref{cor:Stanley}]
Stanley's conjecture holds for graphic matroids of biconed graphs.
\end{theorem}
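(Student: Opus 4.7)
The plan is to combine the two preceding theorems with the standard dictionary between multicomplexes and order ideals of monomials. Recall that a sequence $(h_0, \dots, h_d)$ is an \emph{$\mathcal{O}$-sequence} if there exists an order ideal of monomials $I$ (in some polynomial ring) with $h_i$ counting the degree-$i$ monomials of $I$, and $I$ is \emph{pure} when its maximal monomials share a common total degree. A multicomplex on a finite set $E$, i.e.\ a family of multisets on $E$ closed under taking submultisets, corresponds bijectively to such an order ideal by sending each multiset to the monomial whose variable exponents are its multiplicities, and this correspondence matches cardinality of multisets to total degree of monomials.

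With this dictionary in place, I would first apply the theorem combining Corollary \ref{cor:mainbijection} and Lemma \ref{lem:activity} to identify each entry $h_i$ of the $h$-vector of $\M(\G)$ with the number of partially $2$-edge-rooted forests in $\red$ of degree $i$, that is, the size of the degree-$i$ stratum of $\F(\red)$. Next I would invoke the theorem combining Lemma \ref{lem:multi} and Lemma \ref{lem:pure} to recognize $\F(\red)$ as a pure multicomplex on the edge set of $\red$. The ``degree'' on $\F(\red)$ used to decompose the $h$-vector agrees with the grading making $\F(\red)$ a graded multicomplex, since both count edge-roots in the underlying rooted forest.

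Combining these two inputs, the $h$-vector of $\M(\G)$ is precisely the degree sequence of a pure multicomplex, and hence via the dictionary above is a pure $\mathcal{O}$-sequence. This is exactly Stanley's conjecture for the graphic matroid $\M(\G)$. There is no substantive obstacle beyond the two cited theorems; the only bookkeeping required is to confirm that the two occurrences of ``degree'' coincide, which is immediate from how partially $2$-edge-rooted forests are converted into multisets of edges.
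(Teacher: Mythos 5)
Your proposal is correct and follows the same route as the paper: the paper likewise deduces the corollary by combining Lemma \ref{h-vector} with Lemma \ref{lem:activity} to identify $h_i$ with the number of degree-$i$ monomials in $\F(\red)$, and then invokes Lemma \ref{lem:multi} and Lemma \ref{lem:pure} to conclude that $\F(\red)$ is a pure multicomplex, hence that $h$ is a pure $\mathcal{O}$-sequence. Your added remark about checking that the two notions of degree coincide is exactly the (implicit) bookkeeping in the paper's argument.
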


In \cite{KleeSamper2015} Klee and Samper introduce a combinatorial strengthening of Stanley's conjecture, based on lexicographic shellability and the notion of a based matroid.  Their conjecture involves the construction of a rule for assigning a pure multicomplex to any based matroids, satisfying a list of five properties.  In Theorem \ref{thm:KleeSamper} we show that our constructions satisfy four of these properties for based matroids arising from biconed graphs.  This in turn connects the multicomplex $\F(\red)$ to the \emph{internal poset} of the matroid of a biconed graph $\G$.

The rest of the paper is organized as follows. In Section \ref{sec:Prelim} we recall some basic notions from matroid theory and the study of pure ${\mathcal O}$-sequences, and establish some notation.  In Section \ref{sec:biconed} we describe our main objects of study and establish bijections between three sets: spanning trees of a biconed graph $\G$, birooted forests in $\red$, and 2-weighted forests in $\red$. In Section \ref{sec:multi}, we prove that the set of partially 2-weighted forests is a pure multicomplex.  Here we also prove that the pure ${\mathcal O}$-sequence arising from this multicomplex is the  $h$-vector of the (graphic matroid of the) underlying biconed graph, thus establishing Stanley's conjecture. In Section \ref{sec:Example} we provide a detailed example to demonstrate our various constructions. In Section \ref{sec:Klee-Samper} we discuss how our results relate to the stronger conditions of the Klee-Samper conjecture for this class of matroids. In Section \ref{sec:Future}, we suggest some further applications of 2-weighted forests and also discuss some open questions.


\section{Preliminaries}\label{sec:Prelim}


\subsection{Matroids}

We first review some basic notions of matroid theory, referring to  \cite{BrylawskiOxley1992} for more details.  For the purposes of this paper, a \emph{matroid} ${\mathcal M} = (E,{\mathcal I})$ on a finite ground set $E$ is a nonempty collection ${\mathcal I}$ of subsets of $E$ satisfying the following properties:
\begin{enumerate}
    \item If $A \in {\mathcal I}$ and $B \subset A$ then $B \in {\mathcal I}$;
    \item If $A, B \in {\mathcal I}$ and $|A| > |B|$ then there exists some $e \in A \setminus B$ such that $B \cup e \in {\mathcal I}$.
\end{enumerate}

Here, we suppress (and will continue to suppress) the brackets of singleton sets. The collection ${\mathcal I}$ is called the set of \emph{independent sets} of the matroid. We let ${\mathcal B} = {\mathcal B}(M)$ denote the set of \emph{bases} of the matroid, by definition the set of maximal independent sets (under inclusion). The number of elements in any (and hence every) basis of ${\mathcal M}$ is called the \emph{rank} of the matroid.  Given a matroid ${\mathcal M} = (E, {\mathcal I})$ the \emph{dual matroid} ${\mathcal M}^*$ has ground set $E$ and bases given by the complements of the bases of ${\mathcal M}$, so that ${\mathcal B}({\mathcal M}^*) = \{E \setminus B:B \in {\mathcal B}({\mathcal M})\}$. 


An important example of a matroid, particularly relevant for us, comes from graph theory.  If $G$ is a finite connected graph with vertex set $V(G)$ and edge set $E(G)$ (possibly with loops and multiple edges) one defines the \emph{graphic matroid} ${\mathcal M}(G)$ with ground set $E = E(G)$ and independent sets given by acyclic collections of edges. The bases are then spanning trees of $G$, and hence the rank of ${\mathcal M}(G)$ is given by $|V(G)| - 1$.

\subsection{Activity and $h$-vectors}
The collection of independent sets of a matroid form a \emph{simplicial complex} called the \emph{independence complex} of ${\mathcal M}$. Associated to a simplicial complex of dimension $d-1$, and therefore to a matroid of rank $d$, is its \emph{f-vector} $f = (f_{-1}, f_0, \dots, f_{d-1})$, where $f_{i-1}$ is the number of simplices of cardinality $i$.
 The $h$-vector of the independence complex of ${\mathcal M}$ (which we will simply refer to as the $h$-vector of ${\mathcal M}$) encodes the same information as $f$ in a form that is more convenient, especially in algebraic contexts. 
 
 We can define the entries of $h = (h_0, \dots, h_d)$ according to the linear relation
\[\sum_{i=0}^d f_{i-1}(t-1)^{d-i} = \sum_{k=0}^d h_kt^{d-k}.\]
\noindent
The $h$-vector of a simplicial complex is related to a presentation of the \emph{Hilbert} function of its Stanley-Reisner (face) ring, and in the case of matroids encodes combinatorial data regarding any \emph{shelling} of its independence complex.

In the case of a matroid ${\mathcal M}$ the $h$-vector is also related to a certain expression for the Tutte polynomial of ${\mathcal M}$, expressed in terms of \emph{activity} of elements in the collection of bases.  For this we fix a linear ordering $<$ on the ground set $E$ of of the matroid ${\mathcal M}$. Now suppose $B$ is a basis for ${\mathcal M}$. For any element $e \in B$ we say that $e$ is \emph{internally passive} in $B$ if it can be replaced by a smaller element to obtain another basis; that is, if $(B \setminus e) \cup e^\prime$ is a basis of $\M$ for some $e^\prime < e$. We say that $e \in B$ is \emph{internally active} if it is not internally passive, that is, if it cannot be replaced by any smaller element from the ground set to get another basis.
An edge $e \notin B$ is said to be \emph{externally active (passive)} if it is (is not) the smallest element in the unique circuit containing $B \cup e$. 

In the case of a graphic matroid ${\mathcal M} = {\mathcal M}(G)$ these constructions can be more explicitly described.  Suppose $G$ is a connected graph with ordered edge set $E$, and let $T$ be a spanning tree of $G$.  Removing an edge $e \in T$ creates a forest with two components.  The \emph{fundamental bond} $B_G(T,e)$ with respect to $e \in T$ consists of all edges in $G$ that have an endpoint in each of the two components.  Then $e$ is internally active if it is the smallest element (with respect to the fixed ordering $<$) in its fundamental bond $B_G(T,e)$. Dually, if $e \notin T$, then the addition of $e$ creates a \emph{fundamental circuit} $C(T,e)$ which is the minimum dependent set containing $e$ and edges from $T$.  Then $e$ is externally active if it is the smallest in this set of edges.  From \cite{Bjorner1992} we have the following.

\begin{lemma} \cite[Section 7.3]{Bjorner1992} \label{h-vector}
Suppose ${\mathcal M}$ is a matroid of rank $d$ with an arbitrary fixed ordering of the ground set, and let $h = (h_0, h_1, \dots, h_d)$ denote its $h$-vector. Then $h_i$ equals the number of bases with $i$ internally passive elements with respect to the ordering of the ground set. The value of $h_i$ is independent of the choice of the ordering. 
\end{lemma}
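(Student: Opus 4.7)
The plan is to route through the Tutte polynomial $T_{\mathcal{M}}(x,y)$ of $\mathcal{M}$, which admits two classical expansions: a corank-nullity expansion over subsets of the ground set $E$ that is manifestly independent of the ordering, and an activity expansion over bases that a priori depends on the chosen ordering. Equating these two expressions after the specialization $y = 1$ will simultaneously identify $h_i$ with the count of bases having $i$ internally passive elements and establish that this count is ordering-independent.

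First, I would start from the subset expansion
$$T_{\mathcal{M}}(x,y) = \sum_{S \subseteq E}(x-1)^{d-r(S)}(y-1)^{|S|-r(S)}$$
and specialize to $y=1$. Only subsets with $|S| = r(S)$ (that is, independent sets) survive, giving
$$T_{\mathcal{M}}(t,1) = \sum_{S \in \mathcal{I}} (t-1)^{d-|S|} = \sum_{i=0}^d f_{i-1}(t-1)^{d-i},$$
which, by the defining relation of the $h$-vector recalled in the excerpt, equals $\sum_{k=0}^d h_k\, t^{d-k}$. Next I would invoke the activity expansion $T_{\mathcal{M}}(x,y) = \sum_B x^{\mathrm{ia}(B)} y^{\mathrm{ea}(B)}$, where the sum runs over bases $B$ of $\mathcal{M}$ and $\mathrm{ia}(B)$, $\mathrm{ea}(B)$ count the internally and externally active elements with respect to the fixed ordering. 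Setting $y = 1$ removes the dependence on external activity, and using $\mathrm{ia}(B) + \mathrm{ip}(B) = d$ for every basis $B$ gives
$$T_{\mathcal{M}}(t,1) = \sum_B t^{d - \mathrm{ip}(B)} = \sum_{i=0}^d N_i\, t^{d-i},$$
where $N_i$ denotes the number of bases with exactly $i$ internally passive elements. Comparing coefficients of $t^{d-i}$ in the two displays yields $h_i = N_i$; since $T_{\mathcal{M}}(t,1)$ is manifestly independent of the ordering of $E$, so is $N_i$.

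The main obstacle is justifying the activity expansion itself, i.e., the classical Tutte identity equating the subset and activity forms of $T_{\mathcal{M}}$. This can either be cited from \cite{BrylawskiOxley1992}, or proved directly via Crapo's partition of the Boolean lattice $2^E$ into intervals $[B \setminus \mathrm{IA}(B),\, B \cup \mathrm{EA}(B)]$ indexed by bases, with $\mathrm{IA}(B)$ and $\mathrm{EA}(B)$ the sets of internally and externally active elements. Alternatively one could bypass the Tutte polynomial entirely and follow Bj\"orner's shelling route: construct a lexicographic shelling of the independence complex in which the restriction face of each facet $B$ is precisely its set of internally passive elements, and then invoke the standard fact that the $h$-vector of a shellable complex counts facets by the cardinality of their restrictions.
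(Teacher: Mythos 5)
Your argument is correct. The paper itself offers no proof of this lemma --- it is quoted from \cite{Bjorner1992} --- so there is no internal proof to compare against; but your two-expansion argument is the standard one and it is sound. Both specializations are handled correctly: at $y=1$ the corank--nullity sum collapses onto independent sets and reproduces $\sum_i f_{i-1}(t-1)^{d-i}=\sum_k h_k t^{d-k}$, while the activity expansion together with $\mathrm{ia}(B)+\mathrm{ip}(B)=d$ gives $\sum_i N_i t^{d-i}$, and matching coefficients yields $h_i=N_i$ with ordering-independence falling out for free from the subset side. You correctly isolate the one nontrivial input, namely the equivalence of the subset and activity expansions of the Tutte polynomial, which is legitimately citable from \cite{BrylawskiOxley1992} or provable via Crapo's interval partition of $2^E$. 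For the record, the route actually taken in the cited source \cite{Bjorner1992} is the shelling one you mention as an alternative: a lexicographic shelling of the independence complex whose restriction faces are exactly the sets of internally passive elements, with ordering-independence then coming from the topological invariance of the $h$-vector. That approach buys slightly more (shellability itself, and hence Cohen--Macaulayness), whereas your Tutte-polynomial route is shorter and self-contained at the level of generating functions; either is a complete proof of the stated lemma.
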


For a matroid ${\mathcal M}$ its \emph{Tutte polynomial} is given by
\[T_{\mathcal M}(x,y) = \sum \tau_{i,j} x^iy^j,\]
\noindent
where $\tau_{i,j}$ is the number of bases of ${\mathcal M}$ with $i$ internally active elements and $j$ externally active elements. Hence evaluating the Tutte polynomial at $y=1$ gives a polynomial $T_{\mathcal M}(x,1)$ in one variable $x$ where the coefficient of $x^i$ is given by $h_{d-i}$. We refer to \cite{BrylawskiOxley1992} for more details regarding the Tutte polynomial and external activity.

\subsection{Multicomplexes and (pure) ${\mathcal O}$-sequences}

We next review the notion of ${\mathcal O}$-sequences and purity involved in the statement of Stanley's conjecture. Recall that a \emph{multicomplex} $\Delta$ on a ground set $E = \{e_1, e_2, \dots, e_j\} $ is a collection of multisets of elements from $E$ that is closed under taking subsets: if $\sigma \in \Delta$ and $\tau \subset \sigma$, then $\tau \in \Delta$. 

Equivalently, a multicomplex $\Delta$ may be thought of as a set of monomials $e_1^{a_1} e_2^{a_2} \cdots e_j^{a_j}$ satisfying the property that if $p \in \Delta$ and $q$ divides $p$, then $q \in \Delta$.  In this context, a collection of monomials satisfying this condition will be referred to as an \emph{order ideal}. The multisets in $\Delta$ which are maximal under inclusion are the \emph{facets} of $\Delta$. The size of the largest set in $\Delta$ is called the \emph{degree} of $\Delta$. A multicomplex is \emph{pure} if all its facets have the same cardinality.

A sequence of positive integers $(f_{-1}, f_0, \dots, f_{d-1})$ is said to be an \emph{${\mathcal O}$-sequence} if there exists a multicomplex $\Delta$ with the property that $f_i$ is the number of sets in $\Delta$ with cardinality $i+1$, with $d$ the degree of $\Delta$. The sequence is a \emph{pure ${\mathcal O}$-sequence} if $\Delta$ can be chosen to be a \emph{pure} multicomplex.

\section{Biconed graphs and rooted forests} \label{sec:biconed}

We next turn to our main objects of study.  Here we consider graphs that are finite and undirected but possibly with loops and parallel edges. For a graph $G$ with vertices $u,v \in V(G)$, we use $uv$ to denote the edge $\{u,v\}$. The following construction is similar to what is suggested by Kook and Lee in \cite{KookLee2018}.

\begin{definition}\label{biconing}
Suppose $G$ is a graph with vertex set $V(G)$ and suppose $A$ and $B$ are (not necessarily disjoint) subsets of $V(G)$ satisfying $V(G) = A \cup B$. 
The \emph{biconing} of $G$ with respect to $A$ and $B$ is the addition of two additional vertices $0$ and $\ol{0}$ and edges
\begin{itemize}
    \item 
    $0\ol{0}$, 
    \item $0a$ for all $a \in A$, 
    \item
    $\ol{0}b$ for all $b \in B$. 
    
    \end{itemize}
    We use $\G$ to denote the resulting graph. A \emph{biconed graph} is any graph that is obtained from a biconing.
\end{definition}

See Figures \ref{fig:exgraph} and \ref{fig:exgraphcone} for an example of a graph and its biconing. Examples of biconed graphs include coned graphs, complete multipartite graphs, and Ferrers graphs (see Section \ref{sec:Future}).  The path graph $P_5$ on $5$ vertices is an example of a connected graph that is not biconed.

For a biconed graph $\G$ define $\ol A := B \setminus A$, so that $V(G)$ is the disjoint union of $A$ and $\ol A$. The complete bipartite graphs $K_{m,n}$ investigated in \cite{KookLee2018} are examples of biconed graphs with $A \cap B = \varnothing$, or equivalently satisfying $B = \ol A$. We emphasize that our generalization allows for some vertices of $G$ to be connected to both coning vertices.

We will be interested in the activity of elements in the spanning trees of biconed graphs, and for this we define a total order on the edge set inspired by conventions in \cite{KookLee2018}. Let $m = \abs{A}$ and $n = \abs{\ol{A}}$, and without loss of generality assume that the elements of $A$ and $\ol{A}$ are labeled such that $A = [m] = \{1,\dots,m\}$ and $\ol{A} = [\ol{n}] = \{\ol{1},\dots,\ol{n}\}$. We then order the vertices in the following manner:
$$0<\ol 0<1<2<\dots<m<\ol{1}<\dots<\ol{n}.$$


For the rest of this paper, we assume that the vertices of $\G$ are ordered according to this convention.
For a biconed graph $\G$ we let $T_0 \in \T(\G)$ denote the spanning tree that contains the edge $0\ol{0}$, all edges of form $0u$ for $u \in A$ and edges $\ol{0}v$ for $v \in \ol{A}$.

We now order the edges of $\G$ so that the edges of $T_0$ represent the first $|T_0|$ smallest elements.  In particular we first order the edges of $T_0$ lexicographically according to the order on the vertices described above: $0 \ol{0}, 01, 02, \dots 0m, \ol{0}\ol{1}, \ol{0}\ol{2}, \dots \ol{0}\ol{n}$; and then order the edges of $G \backslash T_0$ (including any parallel edges) arbitrarily.  We use this ordering to define the activity of edges in any spanning tree of $\G$. Note that $T_0$ is the lexicographically smallest spanning tree of $\G$ and has the property that all edges are internally active.


In our work we will primarily be interested in subgraphs obtained by removing the edges of $T_0$. We let $\GG$ denote the graph obtained from $\G$ by removing the edges of $T_0$ and also removing the vertex $0$. See Figure \ref{fig:exgraphreduce} for an illustration.

\begin{figure}[h]
\begin{multicols}{3}
\renewcommand{\thefigure}{1A}

\begin{center}
        \begin{tikzpicture}[scale=.6]
        \node [draw,fill,circle,inner sep = 0pt, minimum size = .15cm,label=below:{\small1}] (M1) at (-.5,0) {};
        \node [draw,fill,circle,inner sep = 0pt, minimum size = .15cm,label=below:{\small2}] (M2) at (.5,0) {};
        \node [draw,fill,circle,inner sep = 0pt, minimum size = .15cm,label=below:{\small3}] (M3) at (1.5,0) {};
        \node [draw,fill,circle,inner sep = 0pt, minimum size = .15cm,label=below:{\small4}] (M4) at (2.5,0) {};
        \node [draw,fill,circle,inner sep = 0pt, minimum size = .15cm,label=above:{\small$\ol{1}$}] (N1) at (0,2) {};
        \node [draw,fill,circle,inner sep = 0pt, minimum size = .15cm,label=above:{\small$\ol{2}$}] (N2) at (1,2) {};
        \node [draw,fill,circle,inner sep = 0pt, minimum size = .15cm,label=above:{\small$\ol{3}$}] (N3) at (2,2) {};
        \node [draw,fill,circle,inner sep = 0pt, minimum size = .15cm,label=above:{\small$\ol{4}$}] (N4) at (3,2) {};
        \node [draw,fill,circle,inner sep = 0pt, minimum size = .15cm,label=below:{\small5}] (M5) at (3.5,0) {};
        \node [draw,fill,circle,inner sep = 0pt, minimum size = .15cm,label=below:{\small6}] (M6) at (4.5,0) {};
        \node [draw,fill,circle,inner sep = 0pt, minimum size = .15cm,label=above:{\small$\ol{5}$}] (N5) at (4,2) {};
        
        \node [inner sep = 0pt, minimum size = .15cm,label=below:{\small\textcolor{white}{0}}] (C0) at (-.8,-1.35) {};
        \node [draw=white, fill=white,circle=white,inner sep = 0pt, minimum size = .25cm,label=above:{\small\textcolor{white}{0}}] (C1) at (-.8,3.35) {};

    \draw (M1) -- (N1);
    \draw (M1) -- (M2);
    \draw (M2) -- (N5);
    \draw (M3) -- (N1);   
    \draw (M3) -- (N5);
    \draw (M4) -- (N4);
    \draw (M5) -- (M6);
    \draw (M6) -- (N5);   
    \draw (N1) -- (N2);
    \draw (N3) -- (N4);
    \draw (M1) -- (N3);
    \draw (M5) -- (N4);
    \draw (M2) -- (N1);
    \end{tikzpicture}
\captionsetup{width=0.85\linewidth}
 \captionof{figure}{The graph $G$ with $V(G) = A \cup B$, where $A = \{1, \dots, 6\}$ and $B = \{1,2,\ol{1}, \dots, \ol{5}\}$.}
  \label{fig:exgraph}
\end{center}

\vfill\null
\columnbreak

\renewcommand{\thefigure}{1B}

\begin{center}

             \begin{tikzpicture}[scale=.6]
        \node [draw,fill,circle,inner sep = 0pt, minimum size = .15cm,label=below:{\small1}] (M1) at (-.5,0) {};
        \node [draw,fill,circle,inner sep = 0pt, minimum size = .15cm,label=below:{\small2}] (M2) at (.5,0) {};
        \node [draw,fill,circle,inner sep = 0pt, minimum size = .15cm,label=below:{\small3}] (M3) at (1.5,0) {};
        \node [draw,fill,circle,inner sep = 0pt, minimum size = .15cm,label=below:{\small4}] (M4) at (2.5,0) {};
        \node [draw,fill,circle,inner sep = 0pt, minimum size = .15cm,label=above:{\small$\ol{1}$}] (N1) at (0,2) {};
        \node [draw,fill,circle,inner sep = 0pt, minimum size = .15cm,label=above:{\small$\ol{2}$}] (N2) at (1,2) {};
        \node [draw,fill,circle,inner sep = 0pt, minimum size = .15cm,label=above:{\small$\ol{3}$}] (N3) at (2,2) {};
        \node [draw,fill,circle,inner sep = 0pt, minimum size = .15cm,label=above:{\small$\ol{4}$}] (N4) at (3,2) {};
        \node [draw,fill,circle,inner sep = 0pt, minimum size = .15cm,label=below:{\small5}] (M5) at (3.5,0) {};
        \node [draw,fill,circle,inner sep = 0pt, minimum size = .15cm,label=below:{\small6}] (M6) at (4.5,0) {};
        \node [draw,fill,circle,inner sep = 0pt, minimum size = .15cm,label=above:{\small$\ol{5}$}] (N5) at (4,2) {};
        
        \node [draw,fill,circle,inner sep = 0pt, minimum size = .15cm,label=below:{\small0}] (C0) at (-1,-1.35) {};
        \node [draw,fill,circle,inner sep = 0pt, minimum size = .15cm,label=above:{\small$\ol{0}$}] (C1) at (-1,3.35) {};
        
    \draw(C0) -- (C1);   
    \draw(C0) -- (M1);
    \draw (C0) -- (M2);
    \draw  (C0) -- (M3);
    \draw  (C0) -- (M4);
    \draw  (C0) -- (M5);
    \draw  (C0) -- (M6);
     \draw  (C1) -- (N1);
    \draw  (C1) -- (N2);
    \draw  (C1) -- (N3);
    \draw  (C1) -- (N4);
    \draw  (C1) -- (N5);
     \draw (M1) -- (N1);
    \draw  (C1) -- (M1);
    \draw  (C1) -- (M2);
    \draw (M1) -- (M2);
    \draw (M2) -- (N5);
    \draw (M3) -- (N1);   
    \draw (M3) -- (N5);
    \draw (M4) -- (N4);
    \draw (M5) -- (M6);
    \draw (M6) -- (N5);   
    \draw (N1) -- (N2);
    \draw (N3) -- (N4);
    \draw (M1) -- (N3);
    \draw (M5) -- (N4);
    \draw (M2) -- (N1);

    \end{tikzpicture}
    \captionsetup{width=0.85\linewidth}
    \captionof{figure}{The resulting biconed graph $\G$.}
    \label{fig:exgraphcone}
    \end{center}

\vfill\null
\columnbreak

\renewcommand{\thefigure}{1C}

\begin{center}

             \begin{tikzpicture}[scale=.6]
        \node [draw,fill,circle,inner sep = 0pt, minimum size = .15cm,label=below:{\small1}] (M1) at (-.5,0) {};
        \node [draw,fill,circle,inner sep = 0pt, minimum size = .15cm,label=below:{\small2}] (M2) at (.5,0) {};
        \node [draw,fill,circle,inner sep = 0pt, minimum size = .15cm,label=below:{\small3}] (M3) at (1.5,0) {};
        \node [draw,fill,circle,inner sep = 0pt, minimum size = .15cm,label=below:{\small4}] (M4) at (2.5,0) {};
        \node [draw,fill,circle,inner sep = 0pt, minimum size = .15cm,label=above:{\small$\ol{1}$}] (N1) at (0,2) {};
        \node [draw,fill,circle,inner sep = 0pt, minimum size = .15cm,label=above:{\small$\ol{2}$}] (N2) at (1,2) {};
        \node [draw,fill,circle,inner sep = 0pt, minimum size = .15cm,label=above:{\small$\ol{3}$}] (N3) at (2,2) {};
        \node [draw,fill,circle,inner sep = 0pt, minimum size = .15cm,label=above:{\small$\ol{4}$}] (N4) at (3,2) {};
        \node [draw,fill,circle,inner sep = 0pt, minimum size = .15cm,label=below:{\small5}] (M5) at (3.5,0) {};
        \node [draw,fill,circle,inner sep = 0pt, minimum size = .15cm,label=below:{\small6}] (M6) at (4.5,0) {};
        \node [draw,fill,circle,inner sep = 0pt, minimum size = .15cm,label=above:{\small$\ol{5}$}] (N5) at (4,2) {};
        

        \node [draw,fill,circle,inner sep = 0pt, minimum size = .15cm,label=above:{\small$\ol{0}$}] (C1) at (-1,3.35) {};
        \node [inner sep = 0pt, minimum size = .15cm,label=below:{\small\textcolor{white}{0}}] (C0) at (-.8,-1.35) {};
        
    \draw (C1) -- (M2);
    \draw (C1) -- (M1);
    \draw (M1) -- (M2);
    \draw (M1) -- (N1);
    \draw (M2) -- (N5);
    \draw (M3) -- (N1);   
    \draw (M3) -- (N5);
    \draw (M4) -- (N4);
    \draw (M5) -- (M6);
    \draw (M6) -- (N5);   
    \draw (N1) -- (N2);
    \draw (N3) -- (N4);
    \draw (M1) -- (N3);
    \draw (M5) -- (N4);
    \draw (M2) -- (N1);

    \end{tikzpicture}
    \captionsetup{width=0.85\linewidth}
    \captionof{figure}{The subgraph $G^{A,B}_{\text{red}}$, obtained by removing the edges of $T_0$ and the vertex $0$.}
    \label{fig:exgraphreduce}
    \end{center}
    
\end{multicols}
\end{figure}

\subsection{Birooted forests}
For a biconed graph $\G$ we let $\T(\G)$ denote its set of spanning trees.  We wish to encode the elements in $\T(\G)$ in terms of more convenient combinatorial structures. For this we'll need the following notions.

\begin{definition}
Suppose $T \in \T(\G)$ is a spanning tree of a biconed graph $\G$. A vertex $v\in T$ is a \emph{connecting vertex} if $v\in A$ and is adjacent to $0$ or $v \in \ol A$ and is adjacent to $\ol 0$. A \emph{connecting edge} in $T$ is an edge of the form $0u$ or $\ol{0}v$ for $u \in A$ or $v \in \ol{A}$.
\end{definition}

See Figure \ref{fig:exampletree} for an example of these concepts.  We will see that connecting edges and vertices encode activity of edges in a spanning tree of a biconed graph. In what follows we think of a tree as a collection of edges, so that if $S$ and $T$ are trees we use $T \backslash S$ to denote the difference of edge sets $E(T) \backslash E(S)$. We then have the following observation, which motivates our constructions.


\begin{lemma}\label{lem:active}\label{lem:passive}
Suppose $T \in \T(\G)$ is a spanning tree of a biconed graph $\G$.  Then the internally passive edges of $T$ are given by all edges in $T \setminus T_0$, as well as all connecting edges involving a connecting vertex $v$ where
\begin{itemize}
    \item $v$ is not the smallest vertex in its component $C_v$ within $T \setminus T_0$, or 
    \item there exists a connecting vertex $w \in C_v$ with $w \neq v$.
    \end{itemize}
\end{lemma}

\begin{example}
Before proving Lemma \ref{lem:passive}, we illustrate these concepts with the graph and spanning tree depicted in Figures \ref{fig:examplegraph} and \ref{fig:exampletree}.  Here the edges of $T \setminus T_0$ consist of edges $\ol{0}1$, $2\ol{1}$, $4\ol{4}$, $5\ol{4}$, $6\ol{5}$, $\ol{1}\ol{2}$, and $\ol{3}\ol{4}$, which are all passive since they can be replaced with smaller connecting edges to obtain another spanning tree.  The connecting edges are $02$, $03$, $06$, $\ol{0}\ol{2}$, and $\ol{0}\ol{4}$.  Among these, $\ol{0}\ol{4}$ is passive according to the first criterion ($\ol{4}$ is not the smallest in its component after deleting $T_0$).  We see that $02$  and $\ol{0}\ol{2}$ are passive according to the second criterion, since there are two connecting vertices in the component. Finally, $03$ and $06$ are not passive as the respective connecting vertices are the smallest in their components.
\end{example}

\begin{figure}[h]
\begin{multicols}{2}
\renewcommand{\thefigure}{2A}
 \begin{center}
 
             \begin{tikzpicture}[scale=.7]
        \node [draw,fill,circle,inner sep = 0pt, minimum size = .15cm,label=below:{\small1}] (M1) at (-.5,0) {};
        \node [draw,fill,circle,inner sep = 0pt, minimum size = .15cm,label=below:{\small2}] (M2) at (.5,0) {};
        \node [draw,fill,circle,inner sep = 0pt, minimum size = .15cm,label=below:{\small3}] (M3) at (1.5,0) {};
        \node [draw,fill,circle,inner sep = 0pt, minimum size = .15cm,label=below:{\small4}] (M4) at (2.5,0) {};
        \node [draw,fill,circle,inner sep = 0pt, minimum size = .15cm,label=above:{\small$\ol{1}$}] (N1) at (0,2) {};
        \node [draw,fill,circle,inner sep = 0pt, minimum size = .15cm,label=above:{\small$\ol{2}$}] (N2) at (1,2) {};
        \node [draw,fill,circle,inner sep = 0pt, minimum size = .15cm,label=above:{\small$\ol{3}$}] (N3) at (2,2) {};
        \node [draw,fill,circle,inner sep = 0pt, minimum size = .15cm,label=above:{\small$\ol{4}$}] (N4) at (3,2) {};
        \node [draw,fill,circle,inner sep = 0pt, minimum size = .15cm,label=below:{\small5}] (M5) at (3.5,0) {};
        \node [draw,fill,circle,inner sep = 0pt, minimum size = .15cm,label=below:{\small6}] (M6) at (4.5,0) {};
        \node [draw,fill,circle,inner sep = 0pt, minimum size = .15cm,label=above:{\small$\ol{5}$}] (N5) at (4,2) {};
        
        \node [draw,fill,circle,inner sep = 0pt, minimum size = .15cm,label=below:{\small0}] (C0) at (-1,-1.35) {};
        \node [draw,fill,circle,inner sep = 0pt, minimum size = .15cm,label=above:{\small$\ol{0}$}] (C1) at (-1,3.35) {};
        
    \draw[line width=1.35pt] (C0) -- (C1);   
    \draw[line width=1.35pt]  (C0) -- (M1);
    \draw[line width=1.35pt]  (C0) -- (M2);
    \draw[line width=1.35pt]  (C0) -- (M3);
    \draw[line width=1.35pt]  (C0) -- (M4);
    \draw[line width=1.35pt]  (C0) -- (M5);
    \draw[line width=1.35pt]  (C0) -- (M6);
     \draw[line width=1.35pt]  (C1) -- (N1);
    \draw[line width=1.35pt]  (C1) -- (N2);
    \draw[line width=1.35pt]  (C1) -- (N3);
    \draw[line width=1.35pt]  (C1) -- (N4);
    \draw[line width=1.35pt]  (C1) -- (N5);
     \draw (M1) -- (N1);
    \draw  (C1) -- (M1);
    \draw  (C1) -- (M2);
    \draw (M1) -- (M2);
    \draw (M2) -- (N5);
    \draw (M3) -- (N1);   
    \draw (M3) -- (N5);
    \draw (M4) -- (N4);
    \draw (M5) -- (M6);
    \draw (M6) -- (N5);   
    \draw (N1) -- (N2);
    \draw (N3) -- (N4);
    \draw (M1) -- (N3);
    \draw (M5) -- (N4);
    \draw (M2) -- (N1);

    \end{tikzpicture}
       \captionsetup{width=1.0\linewidth}
  \captionof{figure}{Our running example of a biconed graph $\G$, with edges of $T_0$ in bold.}
  \label{fig:examplegraph}
    
 \end{center}

\vfill\null
\columnbreak
\renewcommand{\thefigure}{2B}
 \begin{center}
     \begin{tikzpicture}[scale=.7]
        \node [draw,fill,circle,inner sep = 0pt, minimum size = .15cm,label=below:{\small1}] (M1) at (-.5,0) {};
        \node [draw,fill,circle,inner sep = 0pt, minimum size = .21cm,label=below:{\small2}] (M2) at (.5,0) {};
        \node [draw,fill,circle,inner sep = 0pt, minimum size = .21cm,label=below:{\small3}] (M3) at (1.5,0) {};
        \node [draw,fill,circle,inner sep = 0pt, minimum size = .15cm,label=below:{\small4}] (M4) at (2.5,0) {};
        \node [draw,fill,circle,inner sep = 0pt, minimum size = .15cm,label=above:{\small$\ol{1}$}] (N1) at (0,2) {};
        \node [draw,fill,circle,inner sep = 0pt, minimum size = .21cm,label=above:{\small$\ol{2}$}] (N2) at (1,2) {};
        \node [draw,fill,circle,inner sep = 0pt, minimum size = .15cm,label=above:{\small$\ol{3}$}] (N3) at (2,2) {};
        \node [draw,fill,circle,inner sep = 0pt, minimum size = .21cm,label=above:{\small$\ol{4}$}] (N4) at (3,2) {};
        \node [draw,fill,circle,inner sep = 0pt, minimum size = .15cm,label=below:{\small5}] (M5) at (3.5,0) {};
        \node [draw,fill,circle,inner sep = 0pt, minimum size = .21cm,label=below:{\small6}] (M6) at (4.5,0) {};
        \node [draw,fill,circle,inner sep = 0pt, minimum size = .15cm,label=above:{\small$\ol{5}$}] (N5) at (4,2) {};
        
        \node [draw,fill,circle,inner sep = 0pt, minimum size = .15cm,label=below:{\small0}] (C0) at (-.8,-1.35) {};
        \node [draw,fill,circle,inner sep = 0pt, minimum size = .15cm,label=above:{\small$\ol{0}$}] (C1) at (-.8,3.35) {};

    \draw[line width=1.35pt] (C0) -- (M2);
    \draw[line width=1.35pt] (C0) -- (M3);
    \draw[line width=1.35pt] (C0) -- (M6);
    \draw[line width=1.35pt] (C1) -- (N2);
    \draw[line width=1.35pt] (C1) -- (N4);
    \draw (N1) -- (M2);
    \draw (M4) -- (N4);
    \draw (M6) -- (N5);
    \draw (M5) -- (N4);   
    \draw (N1) -- (N2);
    \draw (N3) -- (N4);
    \draw (C1) -- (M1);
    \end{tikzpicture}
    \captionsetup{width=1.0\linewidth}
  \captionof{figure}{
     A spanning tree $T$ of $\G$, with its connecting edges and vertices in bold.}

  \label{fig:exampletree}
 \end{center}

\end{multicols}
\label{fig:example}
\end{figure}

\begin{proof}[Proof of Lemma \ref{lem:passive}]
First suppose that $e$ is an edge in $T \setminus T_0$. By the matroid exchange property there exists an edge $f$ in $T_0$ such that $(T \backslash e) \cup f$ is a spanning tree of $\G$. Note that $f < e$ since all elements of $T_0$ are smaller than $e$.  We conclude that $e$ is passive.


Next suppose that $xv$ is a connecting edge with $x = 0$ or $x = \ol{0}$, that $v$ is the only connecting vertex in its component, and that $u$ is a vertex in $C_v$ which is smaller than $u$.  If $u \in A$, we may replace $xv$ with $0u$; if $u \in \ol{A}$, we may replace $xv$ with $\ol{0}u$; lastly, if $u = \ol{0}$, we may replace $xv$ with $0 \ol{0}$ (in this last case, $v$ must be in $A$).  In all cases, $xv$ is internally passive.

Now suppose $xv$ is a connecting edge, again with $x = 0$ or $x = \ol{0}$, such that the component $C_v \subset T \setminus T_0$ contains a distinct connecting vertex $w \neq v$. We must have that $0\ol{0} \notin T$ and also that $C_v$ is the only component in $T \setminus T_0$ that contains two connecting vertices (since otherwise $T$ would contain a cycle).  We can then replace $xv$ with the smaller $0\ol{0}$ to obtain a spanning tree.  

To see that these are the only internally passive elements, first note that  $0\ol{0}$ is always internally active if present as it's the smallest edge in the underlying graph.  Suppose that $xv$ is a connecting edge in $T$ (with $x = 0$ or $x = \ol{0}$) where $v$ the smallest vertex in $C_v$ and also the only connecting vertex in $C_v$. We must then have $\ol{0} \notin C_v$. If we remove $xv$, to replace it with a smaller edge, we would need to replace it with an edge from $T_0$ that connects a vertex in $C_v$ to $0$ or $\ol 0$. But since $v$ is the smallest vertex in $C_v$ it is not possible to do this with an edge that is smaller that $xv$. We conclude that $xv$ is internally active. The result follows.
\end{proof}

For a biconed graph $\G$, recall that $\GG$ is the graph obtained from $\G$ by removing the edges of $T_0$ and also removing the vertex $0$. Notice that for any spanning tree $T$ of $\G$, the graph $T \setminus T_0$ in provides a subforest of $\GG$. This process of course loses information, but we will attach just enough auxiliary data to such a  subforest to encode all the activity information of the original spanning tree $T$.  See Figure \ref{fig:birooted} for a preview of the construction.

To make this precise we extend some notions from Kook and Lee \cite{KookLee2018}, where the case of complete bipartite graphs was studied.  Suppose $F$ is a (not necessarily maximal) spanning forest of $\GG$. We emphasize that $F$ must contain all the vertices of $\GG$, but need not be a maximal acyclic subgraph of $\GG$, and should be thought of as an acyclic collection of edges of $\GG$ in which all vertices are included. A \emph{rooted component} of $F$ is a component in the forest which has exactly one vertex marked as a \emph{root vertex}. A \emph{birooted component} of $F$ is a component which has $2$ vertex roots, with one root in $A$ and the other in $\ol A \cup \ol{0}$. 

\begin{definition}
Given a biconed graph $G^{A,B}$, a \emph{birooted forest} (of $\GG$) is a spanning forest $F$ of $\GG$ such that $\ol{0}$ is a root vertex, at most one component is birooted, and every other component of $F$ is rooted.
\end{definition}

See Figure \ref{fig:birooted} for an example of birooted forest.  Recall that $\T(\G)$ denotes the set of all spanning trees of a biconed graph $\G$.  We let $\R(\GG)$ denote the set of birooted forests of $G^{A,B}_{\text{red}}$. Our next result relates these two sets.

\begin{lemma}
\label{birooted}
For any biconed graph $\G$ there exists a bijection $\phi_1:\T(\G) \rightarrow \R(\GG)$.
\end{lemma}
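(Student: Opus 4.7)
My plan is to define $\phi_1$ explicitly and exhibit its inverse $\psi_1$. Given $T \in \T(\G)$, form $F = \phi_1(T)$ by deleting from $T$ every edge in $T \cap T_0$ together with the vertex $0$ (which becomes isolated, since every edge of $T$ incident to $0$ lies in $T_0$); for the rooting, mark $\ol{0}$ and, additionally, every vertex $u \in A$ with $0u \in T$ and every $v \in \ol{A}$ with $\ol{0}v \in T$ as a root. Because the deleted edges are exactly those of $T$ that are either incident to $0$ or of the form $\ol{0}v$ with $v \in \ol{A}$, the resulting graph $F$ is a subgraph of $\GG$, and being contained in $T$ it is a forest. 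The inverse $\psi_1$ will send a birooted forest $F$ back to the spanning tree of $\G$ obtained by attaching each $A$-root to $0$ via a coning edge, each $\ol{A}$-root to $\ol{0}$ via a coning edge, and inserting $0\ol{0}$ if and only if no component of $F$ is birooted.

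The heart of the verification is a counting identity. Setting $\epsilon = [0\ol{0} \in T]$, $k_A = |\{u \in A : 0u \in T\}|$, and $e = |\{v \in \ol{A} : \ol{0}v \in T\}|$, we have $|T \cap T_0| = \epsilon + k_A + e$. Deleting these edges and the isolated vertex $0$ leaves a forest with exactly $\epsilon + k_A + e$ components, while the total number of declared roots is $1 + k_A + e$. Thus the roots equal the components when $\epsilon = 1$ and exceed them by one when $\epsilon = 0$; in the latter case at most one component can have more than one root.

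To turn this counting into a proof, I will show that every component of $F$ contains at least one marked vertex. Given any $c \neq 0$, the unique path from $0$ to $c$ in $T$ begins with an edge of $T_0$ and hence contains at least one edge of $T \cap T_0$; taking the last such edge on the path, a three-case check (on its type within $T_0$) shows its endpoint farther from $0$ is always a marked vertex, and it lies in the same component of $F$ as $c$. Combined with the count, this forces exactly one birooted component when $\epsilon = 0$ and none when $\epsilon = 1$. To ensure that the two surplus roots in the $\epsilon = 0$ case lie in $A$ and $\ol{A} \cup \{\ol{0}\}$ respectively, let $a \in A$ and $w_{k-1}$ be the second and penultimate vertices on the $0$-to-$\ol{0}$ path in $T$. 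If $w_{k-1} \in A$, no edge of the subpath $a \to \cdots \to \ol{0}$ is deleted and the component of $\ol{0}$ is birooted with roots $a$ and $\ol{0}$; if $w_{k-1} \in \ol{A}$, the edge $\ol{0}w_{k-1}$ is deleted but the subpath from $a$ to $w_{k-1}$ survives, giving the component of $a$ the roots $a \in A$ and $w_{k-1} \in \ol{A}$. A short subtree argument, exploiting that distinct children of $0$ in $T$ have disjoint subtrees and that each child $v$ of $\ol{0}$ in $\ol{A}$ is severed from $\ol{0}$ when $\ol{0}v$ is removed, rules out any further marked vertex in the birooted component.

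For the inverse, the same counting identity shows that $\psi_1(F)$ has $|V(G)| + 1$ edges, and each component of $F$ attaches to $\{0, \ol{0}\}$ through its root(s); the birooted component, when present, attaches simultaneously to $0$ and $\ol{0}$ and thereby compensates for the absent $0\ol{0}$, so $\psi_1(F)$ is a spanning tree of $\G$. That $\phi_1$ and $\psi_1$ are mutual inverses then follows by construction. The main obstacle is the subtree case analysis in the previous paragraph, where one must verify that the birooted component's two roots are distributed correctly between $A$ and $\ol{A} \cup \{\ol{0}\}$ rather than lying in the same set.
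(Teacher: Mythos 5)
Your map $\phi_1$ (delete $T\cap T_0$ and the vertex $0$, root $\ol{0}$ and the connecting vertices) and its inverse are exactly the construction in the paper's proof, so this is essentially the same approach. Your verification is more explicit than the paper's---the root-versus-component counting identity and the analysis of the $0$-to-$\ol{0}$ path make rigorous what the paper dispatches with ``since $T$ is connected and spanning'' and ``since $T$ is acyclic''---and it is correct.
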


\begin{proof}
Suppose $T \in \T(\G)$ is a spanning tree.  The edges that get deleted from $T$ as we move to the forest $T \setminus T_0$ are edges $0\ol{0}$, edges of the form $0a$ for $a \in A$ and $\ol{0}b$ for $b \in \ol{A}$ (here, $a$ and $b$ are connecting vertices of $T$). To define $\phi_1(T)$ as a spanning forest of $\GG$, we simply root all the connecting vertices. Every component in this forest must have a rooted vertex since $T$ is connected and spanning. Additionally, there is at most one birooted component in $\phi_1(T)$ since $T$ is acyclic. After all of this is done, root the vertex $\ol{0}$ to obtain $\phi_1(T)$.

The inverse can also easily be described.  Given a birooted forest $F$, simply convert the rooted vertices to connecting vertices and add the connecting edges accordingly (connecting vertices from $A$ go to $0$ and those from $\ol{A}$ to $\ol{0}$). Finally, in the case that there is no birooted component we add back the edge $0 \ol{0}$.  The resulting graph $T$ is a spanning tree of $\G$ with the property that $\phi_1(T) = F$.
\end{proof}

We refer to Figures \ref{fig:spantree} and \ref{fig:birooted} for an illustration of the map $\phi_1$. At this point the activity information we need from a spanning tree $T$ of $\G$ is stored in a spanning forest of $\GG$ with the help of auxiliary information stored in the rooted vertices.  To construct a multicomplex, we move this information to the edges.  We describe this process in the next subsection.

\subsection{$2$-weighted-forests} 
Here we describe our method of encoding activity of a spanning tree of a biconed graph in terms of weighted edges in our auxiliary construction. Let $\G$ be a biconed graph and suppose $F$ is any set of edges of $\GG$. Let $\omega_F:F \rightarrow {\mathbb N}_{\geq 1}$ denote a positive integer weighting on these set of edges. We will use $\omega$ if the context is clear and  refer to the pair $(F,\omega)$ as a \emph{weighted collection of edges}. An edge $e \in F$ is said to have \emph{weight n} if $\omega_F(e) = n$.

For a weighted collection of edges $(F, \omega_F)$, a \emph{weighted component}, or just \emph{component} if clear from context, is a (maximal) connected component $S \subset F$ along with a weighting $\omega_S$ which is equal to $\omega_F$ restricted to $S$.  The \emph{excess weight} of a weighted component $S$ is given by
\[\delta_{\ol{0} \in S} + \sum_{e \in S} (\omega(e) - 1),\] 
where $\delta_{\ol{0} \in S}$ equals $1$ if $\ol{0} \in S$ and equals $0$ otherwise.  We assign weight to $\ol{0}$ in this way because it acts in some ways like an additional weighted edge.  A component $S$ is said to be \emph{$k$-excess weighted} if its excess weight is $k$. Finally, an edge in $\GG$ is \emph{crossing} if it connects a vertex in $A$ to a vertex in $\ol{A} \cup \ol{0}$.

\begin{definition}
\label{def:2erf}
For a biconed graph $\G$, a \emph{2-weighted forest} is a weighted set of edges  $(F, \omega)$ with $F \subset E(\GG)$ satisfying:
\begin{itemize}
    \item [$(C0)$] $F$ induces a forest in $\GG$.
    \item [$(C1)$] $(F, \omega)$ has at most one component with excess weight $2$.
    
    \item [$(C2)$] Every other component of $(F, \omega)$ has excess weight $0$ or $1$.
    
    \item [$(C3)$] In a component of $(F, \omega)$ of excess weight $2$ that does not contain $\ol{0}$, the number of crossing edges in the (unique) shortest path containing the weighted edges is odd. In a component of $(F, \omega)$ of excess weight $2$ that does contain $\ol 0$, the number of crossing edges in the shortest path containing both the weighted edge and $\ol{0}$ is odd.
\end{itemize}
\end{definition}

We refer to Figure \ref{fig:2weighted} for an example of a $2$-weighted forest. It will be useful to talk about the subgraph of $\GG$ induced by the edges of a 2-weighted forest. For the rest of the paper we will use the convention that this associated graph has the edges of $F$ and \textit{all} vertices of of $\GG$, whether or not they are an endpoint of an edge in $F$, hence giving a spanning forest.

\begin{remark}
The notion of a $2$-weighted forest is a generalization of the \emph{edge-rooted forests} from \cite{KookLee2018}, where now multiple edge roots are allowed.  In an earlier version of our paper we referred to these objects as \emph{2-edge-rooted forests} but we found the weighting terminology to be less cumbersome.
The main new idea here is the parity condition $(C3)$,  which ensures that in components of excess weight $2$, the edges carrying this excess weight are, in some sense, on opposite sides of the graph. We will see that this condition ensures that the associated multicomplex is pure.
\end{remark}

For a biconed graph $\G$ we let $\F(\GG)$ denote the set of all $2$-weighted forests of $\GG$.  In Lemma \ref{birooted} we saw that spanning trees can be encoded as birooted forests.  Next we observe that birooted forests are encoded in $2$-weighted forests.  Later we will see how the activity of a given spanning tree can be naturally read off in terms of the corresponding $2$-weighted forest.

\begin{lemma}
\label{2-edge-rooted}
For any biconed graph $\G$ there is a bijection $\phi_2: \R(\GG) \rightarrow \F(\GG)$.
\end{lemma}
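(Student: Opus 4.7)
The plan is to construct $\phi_2$ component by component, keeping the underlying forest unchanged and translating vertex roots into edge multiplicities. For each component $C$, let $p_C$ denote its smallest vertex (so $p_C=\ol 0$ whenever $\ol 0\in C$). On a singly rooted component with root $r$: set all multiplicities to $1$ when $r=p_C$; otherwise the unique $r$-to-$p_C$ path in $C$ begins with a distinguished edge $e_r$, and I raise the multiplicity of $e_r$ to $2$. On the (at most one) birooted component with roots $u\in A$ and $v\in \ol A\cup\{\ol 0\}$, let $P$ be the unique $u$-$v$ path. If $v=\ol 0$, raise the multiplicity of the first edge of $P$ to $2$. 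If instead $v\in \ol A$, let $e_u$ and $e_v$ be the first and last edges of $P$: set the multiplicity to $3$ when $e_u=e_v$, and otherwise raise each of $e_u$ and $e_v$ to multiplicity $2$.

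Verifying $\phi_2(F)\in \F(\GG)$ reduces to $(C0)$--$(C3)$. The first three are immediate from the construction (the forest is unchanged; the at-most-one birooted component produces at most one $2$-edge-rooted component; all other components are $0$- or $1$-edge-rooted). For $(C3)$, the key observation is that $u\in A$ and $v\in \ol A\cup\{\ol 0\}$ lie on opposite sides of the bipartition determining bridging edges, so the path $P$ must contain an odd number of bridging edges. When $v\neq \ol 0$, $P$ is precisely the shortest path in $C$ containing both rooted edges, while when $v=\ol 0$, $P$ is the shortest path containing the rooted edge together with $\ol 0$; in either case the odd-bridging count of $P$ is exactly what is required.

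For the inverse, I use the fact that orienting a tree component $C$ with root $p_C$ yields a bijection between $V(C)\setminus\{p_C\}$ and $E(C)$ via $r\mapsto$ (parent edge of $r$). So a $0$-edge-rooted $C$ inverts to singly rooted at $p_C$; a $1$-edge-rooted non-$\ol 0$ component with rooted edge $e$ inverts to singly rooted at the unique vertex whose parent edge is $e$; and an $\ol 0$-component with no edge roots inverts to singly rooted at $\ol 0$. For $2$-edge-rooted components, $(C3)$ forces a canonical birooting: a single $2$-rooted edge must be bridging, so its endpoints give $(u,v)$; two distinct $1$-rooted edges determine a unique shortest path $P$ whose endpoints lie on opposite sides of the bipartition by $(C3)$; and a $2$-edge-rooted $\ol 0$-component with one $1$-rooted edge $e$ pairs $\ol 0$ with the endpoint of $e$ farther from $\ol 0$, which lies in $A$ by parity.

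The main obstacle will be making $(C3)$ do double duty: verifying both that the bipartition-parity argument makes the forward construction land in $\F(\GG)$ and that in the reverse direction it uniquely identifies the pair $(u,v)$ with $u\in A$ and $v\in \ol A\cup\{\ol 0\}$. Once this correspondence is secured, together with the non-pivot-vertex-to-edge bijection for trees rooted at $p_C$, the mutual-inverse check becomes a routine component-by-component verification.
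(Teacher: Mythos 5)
Your construction is exactly the paper's: translate vertex roots to edge roots by $1$-rooting the first edge of the path from the root to the smallest vertex of its component (and the first/last edges of the path between the two roots of the birooted component, with a $2$-root when that path is a single edge), verify $(C3)$ by the parity of bridging edges on a path joining $A$ to $\ol A\cup\{\ol 0\}$, and invert by reading the roots off the rooted edges. Your write-up of the inverse via the parent-edge bijection is somewhat more explicit than the paper's, but the argument is the same.
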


\begin{proof}
Let $R$ be a birooted forest of $\G$, with underlying edge set $F$. To define $\phi_2(R)$, we show how to convert the information of rooted vertices of $R$ into a weighting $\omega:F \rightarrow {\mathbb N}_{\geq 1}$ on the edges. For each rooted (but not birooted) component of $R$, let $v_s$ denote the smallest vertex in that component.  If $v_s$ is the rooted vertex then all edges in that component get weight one.  If the rooted vertex is not $v_s$, then assign $\omega(e) = 2$ where $e$ is the first edge of the (unique) path from the rooted vertex to $v_s$.
Next, if there is a birooted component of $R$ containing $\ol{0}$ (which is always rooted), then we assign $\omega(f) = 2$ where $f$ is the first edge in the path from the other root vertex to $\ol{0}$. For any other birooted component, we assign a weight of $2$ to the first and last edges of the path from one rooted vertex to the other. If this path consists of exactly one edge, we assign a weight of $3$ to that edge. Finally, any edge not yet assigned weight is given weight $1$.

We check that the resulting weighted collection of edges is a $2$-weighted forest. Condition $(C0)$ is automatically met as $F$ consists of the same underlying edge set as $R$. Condition $(C1)$ and $(C2)$ are met since there is at most one birooted component in $R$. Condition $(C3)$ is met since in a birooted component, one vertex root lies in $A$ and the other lies in $\ol A \cup \ol 0$, so the path between the root vertices must cross between $A$ and $\ol A \cup \ol 0$ an odd number of times in total. 

To see that the above process is reversible, notice that just by checking the excess weight of a component it is easy to see if the component was rooted or birooted in the birooted forest. If the component has no excess weight, we give $v_s$ the root. If there is a single weighted edge $e$ we root the vertex of $e$ that is furthest from $v_s$ in that component.  If the component has excess weight $2$ we recover the desired vertex roots by considering the unique shortest path that connects the two weighted edges (or $\ol{0}$ and the single weighted edge). Finally if $v$ is a vertex in $\G$ not used in any edge in $F$ we root that isolated vertex.
\end{proof}

\begin{figure}[h]
\begin{multicols}{3}
\renewcommand{\thefigure}{3A}
 \begin{center}
 
        \begin{tikzpicture}[scale=.6]\label{Fig 1A}
        \node [draw,fill,circle,inner sep = 0pt, minimum size = .15cm,label=below:{\small1}] (M1) at (-.5,0) {};
        \node [draw,fill,circle,inner sep = 0pt, minimum size = .225cm,label=below:{\small2}] (M2) at (.5,0) {};
        \node [draw,fill,circle,inner sep = 0pt, minimum size = .225cm,label=below:{\small3}] (M3) at (1.5,0) {};
        \node [draw,fill,circle,inner sep = 0pt, minimum size = .15cm,label=below:{\small4}] (M4) at (2.5,0) {};
        \node [draw,fill,circle,inner sep = 0pt, minimum size = .15cm,label=above:{\small$\ol{1}$}] (N1) at (0,2) {};
        \node [draw,fill,circle,inner sep = 0pt, minimum size = .225cm,label=above:{\small$\ol{2}$}] (N2) at (1,2) {};
        \node [draw,fill,circle,inner sep = 0pt, minimum size = .15cm,label=above:{\small$\ol{3}$}] (N3) at (2,2) {};
        \node [draw,fill,circle,inner sep = 0pt, minimum size = .225cm,label=above:{\small$\ol{4}$}] (N4) at (3,2) {};
        \node [draw,fill,circle,inner sep = 0pt, minimum size = .15cm,label=below:{\small5}] (M5) at (3.5,0) {};
        \node [draw,fill,circle,inner sep = 0pt, minimum size = .225cm,label=below:{\small6}] (M6) at (4.5,0) {};
        \node [draw,fill,circle,inner sep = 0pt, minimum size = .15cm,label=above:{\small$\ol{5}$}] (N5) at (4,2) {};
        
        \node [draw,fill,circle,inner sep = 0pt, minimum size = .15cm,label=below:{\small0}] (C0) at (-.8,-1.35) {};
        \node [draw,fill,circle,inner sep = 0pt, minimum size = .15cm,label=above:{\small$\ol{0}$}] (C1) at (-.8,3.35) {};

    \draw[line width=1.5pt] (C0) -- (M2);
    \draw[line width=1.5pt] (C0) -- (M3);
    \draw[line width=1.5pt] (C0) -- (M6);
    \draw[line width=1.5pt] (C1) -- (N2);
    \draw[line width=1.5pt] (C1) -- (N4);
    \draw (N1) -- (M2);
    \draw (M4) -- (N4);
    \draw (M6) -- (N5);
    \draw (M5) -- (N4);   
    \draw (N1) -- (N2);
    \draw (N3) -- (N4);
    \draw (C1) -- (M1);
    \end{tikzpicture}
    \captionsetup{width=1.0\linewidth}
  \captionof{figure}{
     A spanning tree $T$ of $\G$, with its connecting edges and vertices in bold.}
  \label{fig:spantree}
 \end{center}

\vfill\null
\columnbreak
\renewcommand{\thefigure}{3B}
 \begin{center}
 
        \begin{tikzpicture}[scale=.6]
        \node [draw,fill,circle,inner sep = 0pt, minimum size = .15cm,label=below:{\small1}] (M1) at (-.5,0) {};
        \node [draw,fill,circle,inner sep = 0pt, minimum size = .25cm,label=below:{\small2}] (M2) at (.5,0) {};
        \node [draw,fill,circle,inner sep = 0pt, minimum size = .25cm,label=below:{\small3}] (M3) at (1.5,0) {};
        \node [draw,fill,circle,inner sep = 0pt, minimum size = .15cm,label=below:{\small4}] (M4) at (2.5,0) {};
        \node [draw,fill,circle,inner sep = 0pt, minimum size = .15cm,label=above:{\small$\ol{1}$}] (N1) at (0,2) {};
        \node [draw,fill,circle,inner sep = 0pt, minimum size = .25cm,label=above:{\small$\ol{2}$}] (N2) at (1,2) {};
        \node [draw,fill,circle,inner sep = 0pt, minimum size = .15cm,label=above:{\small$\ol{3}$}] (N3) at (2,2) {};
        \node [draw,fill,circle,inner sep = 0pt, minimum size = .25cm,label=above:{\small$\ol{4}$}] (N4) at (3,2) {};
        \node [draw,fill,circle,inner sep = 0pt, minimum size = .15cm,label=below:{\small5}] (M5) at (3.5,0) {};
        \node [draw,fill,circle,inner sep = 0pt, minimum size = .25cm,label=below:{\small6}] (M6) at (4.5,0) {};
        \node [draw,fill,circle,inner sep = 0pt, minimum size = .15cm,label=above:{\small$\ol{5}$}] (N5) at (4,2) {};

        \node [inner sep = 0pt, minimum size = .15cm,label=below:{\small\textcolor{white}{0}}] (C0) at (-.8,-1.35) {};
        \node [draw,fill,circle,inner sep = 0pt, minimum size = .25cm,label=above:{\small$\ol{0}$}] (C1) at (-.8,3.35) {};

    \draw (N1) -- (M2);   
    \draw (M4) -- (N4);
    \draw (M6) -- (N5);
    \draw (M5) -- (N4);   
    \draw (N1) -- (N2);
    \draw (N3) -- (N4);
    \draw (C1) -- (M1);
        \end{tikzpicture}
       \captionsetup{width=1.0\linewidth}
  \captionof{figure}{The associated birooted forest.}
  \label{fig:birooted}
 \end{center}
 
 \vfill\null
\columnbreak
\renewcommand{\thefigure}{3C}        
 \begin{center}
 
        \begin{tikzpicture}[scale=.6]
        \node [draw,fill,circle,inner sep = 0pt, minimum size = .15cm,label=below:{\small1}] (M1) at (-.5,0) {};
        \node [draw,fill,circle,inner sep = 0pt, minimum size = .15cm,label=below:{\small2}] (M2) at (.5,0) {};
        \node [draw,fill,circle,inner sep = 0pt, minimum size = .15cm,label=below:{\small3}] (M3) at (1.5,0) {};
        \node [draw,fill,circle,inner sep = 0pt, minimum size = .15cm,label=below:{\small4}] (M4) at (2.5,0) {};
        \node [draw,fill,circle,inner sep = 0pt, minimum size = .15cm,label=above:{\small$\ol{1}$}] (N1) at (0,2) {};
        \node [draw,fill,circle,inner sep = 0pt, minimum size = .15cm,label=above:{\small$\ol{2}$}] (N2) at (1,2) {};
        \node [draw,fill,circle,inner sep = 0pt, minimum size = .15cm,label=above:{\small$\ol{3}$}] (N3) at (2,2) {};
        \node [draw,fill,circle,inner sep = 0pt, minimum size = .15cm,label=above:{\small$\ol{4}$}] (N4) at (3,2) {};
        \node [draw,fill,circle,inner sep = 0pt, minimum size = .15cm,label=below:{\small5}] (M5) at (3.5,0) {};
        \node [draw,fill,circle,inner sep = 0pt, minimum size = .15cm,label=below:{\small6}] (M6) at (4.5,0) {};
        \node [draw,fill,circle,inner sep = 0pt, minimum size = .15cm,label=above:{\small$\ol{5}$}] (N5) at (4,2) {};
        
        \node [inner sep = 0pt, minimum size = .15cm,label=below:{\small\textcolor{white}{0}}] (C0) at (-.8,-1.35) {};
        \node [draw,fill,circle,inner sep = 0pt, minimum size = .15cm,label=above:{\small$\ol{0}$}] (C1) at (-.8,3.35) {};

         \draw [line width=1pt,style=double,thick](M2) -- (N1);
        \draw (M1) -- (C1);   
        \draw (N3) -- (N4);
        \draw (M6) -- (N5);
        \draw (M5) -- (N4);   
        \draw [line width=1pt,style=double,thick] (N1) -- (N2);
        \draw [line width=1pt,style=double,thick] (M4) -- (N4);

        \end{tikzpicture}
      \captionsetup{width=1.0\linewidth}
  \captionof{figure}{The associated 2-weighted forest.}
    \label{fig:2weighted}
  \end{center}

\end{multicols}

\end{figure}

By composing the two maps we created in this section we get a bijection $\phi = \phi_2 \circ \phi_1$ from $\T(\G)$ to $\F(\GG)$:

\begin{corollary}\label{cor:mainbijection}
For any biconed graph $\G$ the function $\phi:\T(\G) \rightarrow \F(\GG)$ defined above is a bijection from the set of spanning trees of $\G$ to the collection of $2$-weighted forests of $\GG$.
\end{corollary}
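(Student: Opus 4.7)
The plan is to observe that this corollary is an immediate consequence of the two preceding lemmas, so the proof amounts to verifying that the composition of bijections is a bijection and that the domains and codomains line up correctly. Specifically, Lemma \ref{birooted} produces a bijection $\phi_1 : \T(\G) \to \R(\GG)$ and Lemma \ref{2-edge-rooted} produces a bijection $\phi_2 : \R(\GG) \to \F(\GG)$, so the composition $\phi = \phi_2 \circ \phi_1$ maps $\T(\G)$ to $\F(\GG)$ and is a bijection by the standard fact that the composition of two bijections between sets is itself a bijection.

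First I would state that $\phi$ is well-defined, since the codomain of $\phi_1$ is precisely the domain of $\phi_2$. Then I would cite Lemma \ref{birooted} to get that $\phi_1$ is a bijection and Lemma \ref{2-edge-rooted} to get that $\phi_2$ is a bijection, and conclude that $\phi$ is a bijection. If I wanted to be more self-contained I would describe the inverse explicitly: given $\Fm \in \F(\GG)$, apply the inverse procedure in Lemma \ref{2-edge-rooted} to recover a birooted forest (reading off the vertex roots from the pair of rooted edges, or from the unique rooted edge together with $\ol{0}$, or by rooting the smallest vertex when no edges are rooted), and then apply the inverse of $\phi_1$ to convert the vertex roots into connecting edges to $0$ and $\ol{0}$ and (when there is no birooted component) to reinsert the edge $0\ol{0}$.

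There is no real obstacle here; the lemmas were stated so that the corollary falls out immediately. The only mild thing to watch is that the two inverse procedures are compatible: the inverse of $\phi_2$ always produces an element of $\R(\GG)$ (i.e., the vertex roots it generates always lie in the correct sets, with one vertex root in $A$ and the other in $\ol{A} \cup \{\ol{0}\}$ whenever a component is birooted), and this is guaranteed precisely by the parity condition $(C3)$ in Definition \ref{def:2erf}. Thus $\phi$ is a bijection, which is the content of the corollary.
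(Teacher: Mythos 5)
Your proposal is correct and matches the paper exactly: the paper offers no separate proof of Corollary \ref{cor:mainbijection}, simply noting that $\phi = \phi_2 \circ \phi_1$ is the composition of the bijections from Lemma \ref{birooted} and Lemma \ref{2-edge-rooted}. Your extra remark about compatibility of the inverses via condition $(C3)$ is a reasonable sanity check but is already handled inside the proof of Lemma \ref{2-edge-rooted}.
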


See Figure~$3$ for an illustration of the correspondence. In the next section we will show that the collection of $2$-weighted forests gives rise to a pure multicomplex, leading to a proof of Stanley's conjecture for matroids arising from biconed graphs.

\section{A multicomplex of 2-weighted forests}\label{sec:multi}

In this section, we show that the collection $\F(\GG)$ of $2$-weighted forests associated to a biconed graph $\G$ provides a pure multicomplex that encodes the $h$-vector of $\G$. In this context, it will be convenient to think of elements of $\F(\GG)$ as monomials.  For this, we associate a variable to each edge of $\GG$ and construct a monomial from a weighed set $(F, \omega)$ with $F \subset \F(\GG)$ by simply raising all elements to the corresponding weighting.  For example the weighted set $\{c,e,f\}$ with $\omega(e) = \omega(f) = 1$ and $\omega(c) = 3$ is labeled $c^3ef$ (see Figure \ref{fig:monomialc3ef}). Via this correspondence, we will refer to the \emph{degree} of an element of $\F(\GG)$, by which we mean the degree of the corresponding monomial. Under this viewpoint, we prove that $\F(\GG)$ is a pure multicomplex and the resulting pure ${\mathcal O}$-sequence is the $h$-vector of $\G$. The first step is to show how the degree of a monomial encodes passivity of the underlying spanning tree.

\begin{example}
\label{multiset}
Let $G$ be $K_4$ with the vertices partitioned by $A = \{1,2\}$, $B = \ol{A} = \{\ol{1},\ol{2}\}$, and $E(G) = \{a,b,c,d,e,f\}$.  The 2-weighted forest in Figure \ref{fig:monomiala2df2} can be thought of as the monomial $a^2df^2$.  In Figure \ref{fig:monomialaf2} we have the monomial $af^2$, obtained by removing an edge and lowering the weight of another edge from  Figure \ref{fig:monomiala2df2}. Note that this results in a monomial that divides the original.
\end{example}

\begin{figure}
\begin{multicols}{3}
\begin{center}
    \renewcommand{\thefigure}{4A}\label{fig4}
        \begin{tikzpicture}
    
    \node [draw,fill,circle,inner sep = 0pt, minimum size = .15cm,label=below:{\small1}] (M2) at (0,0) {};
    \node [draw,fill,circle,inner sep = 0pt, minimum size = .15cm,label=below:{\small2}] (B2) at (2,0) {};
    \node [draw,fill,circle,inner sep = 0pt, minimum size = .15cm,label=above:{\small$\ol{1}$}] (M1) at (0,2) {};
    \node [draw,fill,circle,inner sep = 0pt, minimum size = .15cm,label=above:{\small$\ol{2}$}] (B1) at (2,2) {};
    \node [label=left:{$c^3$}] (L1) at (1,.9) {};
    \node [label=below:{\small$f$}] (L2) at (1,0) {};
    \node [label=right:{\small$e$}] (L3) at (2,1) {};
    
    \draw[triple={[line width=.5mm,black] in
    [line width=1mm,white] in
    [line width=2mm,black]}] (M1) to (B2);
    \draw (M2) -- (B2);
    \draw (B1) -- (B2);
    
    \end{tikzpicture}
    \captionsetup{width=1.0\linewidth}
  \captionof{figure}{A $2$-weighted forest giving rise to the monomial $c^3 e f$.}
   \label{fig:monomialc3ef}
\end{center}

\vfill\null
\columnbreak
\begin{center}
\renewcommand{\thefigure}{4B}

        \begin{tikzpicture}
    
    \node [draw,fill,circle,inner sep = 0pt, minimum size = .15cm,label=below:{\small1}] (M2) at (0,0) {};
    \node [draw,fill,circle,inner sep = 0pt, minimum size = .15cm,label=below:{\small2}] (B2) at (2,0) {};
    \node [draw,fill,circle,inner sep = 0pt, minimum size = .15cm,label=above:{\small$\ol{1}$}] (M1) at (0,2) {};
    \node [draw,fill,circle,inner sep = 0pt, minimum size = .15cm,label=above:{\small$\ol{2}$}] (B1) at (2,2) {};
    \node [label=left:{\small$d$}] (L1) at (1,1) {};
    \node [label=below:{\small$f^2$}] (L2) at (1,0) {};
    \node [label=above:{\small$a^2$}] (L3) at (1,2) {};
    
    \draw [line width=1pt,style=double,thick] (M1) -- (B1);
    \draw [line width=1pt,style=double,thick] (M2) -- (B2);
    \draw (B1) -- (M2);
    
    \end{tikzpicture}
    \captionsetup{width=1.0\linewidth}
  \captionof{figure}{{A $2$-weighted forest corresponding to the monomial $a^2 d f^2$.}}
   \label{fig:monomiala2df2}
\end{center}

\vfill\null
\columnbreak
\begin{center}
\renewcommand{\thefigure}{4C}

        \begin{tikzpicture}
    
    \node [draw,fill,circle,inner sep = 0pt, minimum size = .15cm,label=below:{\small1}] (M2) at (0,0) {};
    \node [draw,fill,circle,inner sep = 0pt, minimum size = .15cm,label=below:{\small2}] (B2) at (2,0) {};
    \node [draw,fill,circle,inner sep = 0pt, minimum size = .15cm,label=above:{\small$\ol{1}$}] (M1) at (0,2) {};
    \node [draw,fill,circle,inner sep = 0pt, minimum size = .15cm,label=above:{\small$\ol{2}$}] (B1) at (2,2) {};
    \node [label=below:{\small$f^2$}] (L2) at (1,0) {};
    \node [label=above:{\small$a$}] (L3) at (1,2) {};
    
    \draw [line width=1pt,thick] (M1) -- (B1);
    \draw [line width=1pt,style=double,thick] (M2) -- (B2);
    
    \end{tikzpicture}
    \captionsetup{width=1.0\linewidth}
  \captionof{figure}{{A $2$-weighted forest with monomial $a f^2$.}}
   \label{fig:monomialaf2}
\end{center}

\end{multicols}
\end{figure}

\begin{lemma} \label{lem:activity}
\label{BandF}
The function $\phi:\T(\G) \rightarrow \F(\GG)$ described in Corollary \ref{cor:mainbijection} maps a spanning tree of $\G$ with $i$ internally passive edges to a monomial of degree $i$.
\end{lemma}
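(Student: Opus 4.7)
The plan is to compute the degree of the monomial $\phi(T) = \Fm$ directly from the constructions of Lemmas~\ref{birooted} and~\ref{2-edge-rooted}, obtaining $\deg \phi(T) = |F| + r$ where $r$ is the total number of edge-roots in $\Fm$, and then to match this against the number of internally passive edges of $T$ in two parts. The edge $0\ol{0}$ is the globally lex-smallest edge of $\G$, so whenever it appears in $T$ it is automatically internally active; the remaining edges of $T$ split as $F = T \setminus T_0$ together with the connecting edges of $T$. I will show that the former family consists entirely of passive edges and that the latter contributes exactly $r$ passive edges via a per-component analysis.

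\emph{Step 1: edges of $F$ are internally passive.} If $e \in T \setminus T_0$ then $e \in E(\GG)$ and in particular $e$ is not incident to $0$. Removing $e$ from $T$ splits the tree into pieces $X \ni 0$ and $Y$. When $\ol{0} \in Y$, the edge $0\ol{0}$ lies in the fundamental bond $B_\G(T,e)$ and is lex-smaller than $e$. When $\ol{0} \in X$, we have $Y \subseteq V(G) = A \cup \ol{A}$, and taking $v := \min(A \cap Y)$ when $A \cap Y \neq \varnothing$ (or $v := \min(\ol{A} \cap Y)$ otherwise) yields an edge $0v$ or $\ol{0}v$ in the bond that is lex-smaller than $e$ (a short side check handles the case when $e$ itself is incident to $\ol{0}$). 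In every case $e$ is passive, contributing exactly $|F|$ to the passive count.

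\emph{Step 2: passive connecting edges correspond bijectively to edge-roots.} Each connecting edge of $T$ is attached to a unique component of $T \setminus T_0$. Following the case analysis used in the proof of Lemma~\ref{2-edge-rooted}, the components fall into five types: (1) rooted with connecting vertex equal to $v_s$, the smallest vertex of the component; (2) rooted with connecting vertex distinct from $v_s$; (3) rooted as $\ol{0}$'s component with no connecting vertex; (4) birooted with two connecting vertices $a \in A$ and $b \in \ol{A}$, necessarily with $\ol{0} \notin C$; and (5) birooted consisting of $\ol{0}$ together with a single connecting vertex $a \in A$. A direct fundamental-bond computation, in the same lex-minimum spirit as Step 1, shows the number of passive connecting edges attached to the component is $0,1,0,2,1$ respectively. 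Crucially, in types (4) and (5) the existence of a birooted component forces $0\ol{0} \notin T$, making $0\ol{0} \in E(\G)$ available to appear in the fundamental bond; combined with a path argument showing that removing any connecting edge of such a component separates $0$ from $\ol{0}$ in $T$, this forces every connecting edge of types (4) and (5) to be passive. The sequence $0,1,0,2,1$ matches precisely the number of edge-roots that $\phi_2$ assigns to each component type, so summing over all components gives exactly $r$ passive connecting edges.

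Combining Step 1 and Step 2, the total number of internally passive edges of $T$ equals $|F| + r = \deg \phi(T)$, as desired. The main obstacle is the bond analysis in types (4) and (5): verifying that removing any connecting edge of a birooted component $C$ disconnects $0$ from $\ol{0}$ requires a careful description of the unique $0$-to-$\ol{0}$ path in $T$, which must traverse $C$ through its connecting edge(s). Once this structural fact is pinned down, $0\ol{0}$ enters the fundamental bond as a strictly smaller edge and the passivity conclusion is immediate.
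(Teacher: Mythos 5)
Your proposal is correct and follows essentially the same route as the paper's proof: both decompose the passive edges of $T$ into the edges of $T\setminus T_0$ (all passive) plus the passive connecting edges, and match the latter component-by-component with the edge roots assigned by $\phi_2$, using the availability of $0\ol{0}$ as a smaller exchange partner for the connecting edges of a birooted component. Your version is simply more explicit where the paper appeals to basis exchange against $T_0$ and to Remark~\ref{rem:active}, and your counts $0,1,0,2,1$ for the five component types agree with the paper's case analysis.
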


\begin{proof}
Suppose $T$ is a spanning tree of $\G$.  We use Lemma \ref{lem:active} to identify the internally passive edges of $T$ and analyze how this information is transferred to the edge weights of $\phi(T)$, resulting in a monomial with degree equal to the number of internally passive edges of $T$. It suffices to show that each passive edge of $T$ increases the degree of the monomial by $1$.

From Lemma \ref{lem:active}, recall that $0\ol{0}$, if present, must internally active, and all edges of $T \setminus T_0$ are internally passive. These internally passive edges are the variables of our monomial, corresponding to a subset of the edges of $\GG$.  The remaining edges of $T$ connect $0$ or $\ol{0}$ to connecting vertices of $T$, and we will encode any internal passivity amongst these edges via the extra weighting on the edges of $T \setminus T_0$.

Let $xv$ be a connecting edge in $T$ (so that $x=0$ or $x = \ol{0}$) and let $C_v$ denote its component in $T\setminus T_0$.  We now have two cases to consider. 

First suppose that $v$ is the only connecting vertex in $C_v$. By Lemma \ref{lem:active}, the edge $xv$ is passive if and only if $v$ is not the smallest vertex in $C_v$.  In a component with one connecting vertex (this corresponds to a rooted component which does not contain $\ol 0$ or a birooted component which does contain $\ol 0$), $\phi$ assigns an edge in the component an extra weight exactly when $v$ is not the smallest vertex in $C_v$, thus increasing the degree of the monomial by $1$ if and only if $xv$ is passive.

Next suppose that $C_v$ has two connecting vertices. Again, Lemma \ref{lem:active} tells us that $xv$ is passive. The component $C_v$ receives an extra weight inside $\phi(T)$, by weighting the first edge in the path from $v$ to the other root inside $C_v$, thus increasing the degree of the monomial by $1$ (the component thus ends up with excess weight $2$, with a contribution of $1$ from each of its two connecting vertices).

Components with no connecting vertices must contain $\ol 0$ and must be rooted, but not birooted, and receive no extra weighting under $\phi$.

We have seen that each passive edge of $T \setminus T_0$ contributes an extra weight to some edge of $\phi(T)$, and hence an extra degree to the desired monomial.  These are the only situations under which $\phi$ assigns excess weight to an edges, so this proves the claim.
\end{proof}

Note that a $2$-weighted forest $F$ can naturally be thought of as a multiset on the underlying set of edges of $\GG$, where the number of occurrences of any edge is given by its weighting.  Our next two lemmas show that this collection forms a pure multicomplex.

\begin{lemma}
\label{lem:multi}
For any biconed graph $\G$, the multiset $\F(\GG)$ is a multicomplex on the edges of $\GG$. 
\end{lemma}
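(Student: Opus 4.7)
The plan is to show that $\F(\GG)$ is closed under the sub-multiset relation, which suffices since any sub-multiset is obtained by iteratively decrementing multiplicities one unit at a time. Fix $\Fm \in \F(\GG)$ and an edge $e$ with $\mathbf{m}(e) \geq 1$; the goal is to show that decrementing $\mathbf{m}(e)$ by one yields another element of $\F(\GG)$. Two cases arise: either $e$ is rooted (so $\mathbf{m}(e) \geq 2$ and the decrement only reduces the edge-root count by one), or $e$ is unrooted (so $\mathbf{m}(e) = 1$ and the decrement deletes $e$ from the underlying forest).

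The first case is immediate. The underlying spanning forest $\Fsp$ is unchanged, so $(C0)$ holds, and only the edge-root count of the single component containing $e$ decreases by one. A $2$-edge-rooted component thus becomes $1$-edge-rooted, and all other components are untouched; hence $(C1)$ and $(C2)$ persist. If any $2$-edge-rooted component survives, it is not the one containing $e$, so its shortest root-containing path is preserved and $(C3)$ is inherited.

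The second case is the main point. Deleting $e$ splits its component into two subcomponents, and the result is still a spanning forest, giving $(C0)$. If the original component was $0$- or $1$-edge-rooted, each subcomponent is $0$- or $1$-edge-rooted and no further check is required. The delicate subcase is when $e$ lies in the unique $2$-edge-rooted component $K$ (unique by $(C1)$): either the two rooted edges (or the rooted edge and $\ol{0}$) are separated by the cut, yielding two $1$-edge-rooted pieces and reducing to a trivial check, or both remain in one subcomponent, producing one $0$-edge-rooted piece together with a new $2$-edge-rooted piece. The obstacle is verifying $(C3)$ in the latter scenario, but this happens precisely when $e$ does not lie on the shortest root-containing path of $K$, so that path survives intact inside the new $2$-edge-rooted subcomponent and its bridging-edge count (odd by hypothesis) is inherited. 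Hence $(C0)$--$(C3)$ all hold in every scenario, completing the argument.
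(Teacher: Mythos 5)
Your proof is correct and follows essentially the same route as the paper: reduce to single decrements, observe that $(C0)$--$(C2)$ are easy, and verify $(C3)$ by noting that the $2$-edge-rooted component either disappears or keeps its root-containing path intact because the deleted edge was off that path. Your version is somewhat more carefully cased (separating root-decrement from edge-deletion) than the paper's, but the underlying argument is identical.
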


\begin{proof}
We must show that the set of $2$-weighted forests is closed under taking subsets.  To this end, let $(F,\omega) \in \F(\GG)$ be a $2$-weighted forest, and let $e \in F$ be any edge. We define a new multiset  $(F^\prime, \omega^\prime)$ obtained by removing one occurrence of $e$, and want to show that it produces a valid $2$-weighted forest. We have two cases to consider, depending on the weight $\omega(e)$.

First suppose $\omega(e) \geq 2$.  We then  define $(F^\prime, \omega^\prime)$ as $F^\prime = F$ and $\omega^\prime(e) = \omega(e)-1$ (with the weights equal everywhere else).   We check the conditions given in Definition \ref{def:2erf}. Condition $(C0)$ is clearly satisfied since we have the same underlying set $F$ of edges. Similarly, conditions $(C1)$ and $(C2)$ are satisfied since $\omega^\prime(x) \leq \omega(x)$ for all edges $x \in F$. For $(C3)$, we assume that $(F,\omega)$ has a component $C$ with excess weight $2$ (since otherwise there is nothing to check). If $e \in C$ then $(F^\prime, \omega^\prime)$ now has no component with excess weight $2$. If $e \notin C$ we see that $C$ is not affected and hence still satisfies the condition.

Next we suppose $\omega(e) = 1$. We define $(F^\prime, \omega^\prime)$ as $F^\prime = F \backslash e$ and $\omega^\prime = \omega|_{F^\prime}$. In this case, $(C0)$ is satisfied since $F^\prime \subset F$ still cannot have any cycles.  Conditions $(C1)$ and $(C2)$ are again satisfied since $\omega^\prime(x) \leq \omega(x)$ for all edges $x \in F^\prime$. For condition $(C3)$  we again can assume that $(F,\omega)$ has a component $C$ with excess weight $2$. If $e$ is one of the edges in the unique path containing the weighted edges of $C$, then removing $e$ results in an $(F^\prime, \omega^\prime)$ with no component with excess weight $2$. Otherwise, the path between the weighted edges is unaltered. Either way, condition $(C3)$ is satisfied.  We conclude that $(F^\prime, \omega^\prime) \in \F(\GG)$, and the result follows.
\end{proof}

 



\begin{lemma} \label{lem:pure}
For any biconed graph $\G$, the multicomplex $\F(\GG)$ is pure.
\end{lemma}
\begin{proof}
We must show that all maximal elements (under inclusion) of $\F(\GG)$ have the same degree. Recall from Lemma \ref{lem:activity} that the degree of a maximal element $(F,\omega)$ is given by the number of internally passive edges in the spanning tree $T$ where $\phi(T) = (F, \omega)$.  We start by identifying which edges are always active. Any bridge (an edge whose removal increases the number of connected components) in $\G$ must be contained in every spanning tree and furthermore must be internally active.  Bridges in $\G$ must have either $0$ or $\ol{0}$ as an endpoint (and $0\ol{0}$ is a bridge if and only if $\GG$ has no crossing edges). Any spanning forest of $\GG$ will contain a singular component (a component consisting of a single vertex) for each bridge of $\G$ that is not $0\ol{0}$.

Now let $d$ be the largest degree of any monomial in $\F(\GG)$. Note that a $2$-weighted forest of degree $d$ corresponds via $\phi$ to a spanning tree of $\G$ with the property that every non-bridge edge is passive.   From the above discussion, and translating the description of passive edges to $\F(\GG)$ via Lemma \ref{lem:active}, we see that $(F,\omega) \in \F(\GG)$ has degree $d$ if and only if 

\begin{itemize}
    \item all nonsingular components of $(F,\omega)$ have excess weight at least $1$,
    \item if $\GG$ has at least one crossing edge then $(F, \omega)$ has exactly one nonsingular component with excess weight $2$, and 
    \item singular components are endpoints of bridges in $\G$.
    \end{itemize}
To prove the lemma, it suffices to show that for any $(F,\omega) \in \F(\GG)$ that does not satisfy these properties, there is some $(F', \omega') \in \F(\GG)$ of higher degree such that $F \subseteq F'$ and $\omega(e) \leq \omega'(e)$ for any $e$ where $\omega'(e)$ is defined. To find such a $2$-weighted forest $(F', \omega')$, we identify an edge which may be added to $F$ or have its weight increased to get another valid $2$-weighted forest. We break it down into several cases.  For each case the conditions $(C0)$, $(C1)$, and $(C2)$ of Definition \ref{def:2erf} will be clearly satisfied by $(F', \omega')$, and only $(C3)$ will need  some explanation. 

First note that if $(F,\omega)$ contains any non-singular component which is of excess weight 0, we may increase the weight of any edge in the component to get our desired $(F', \omega')$. If $F$ contains a singular component $v$ that is not $\ol{0}$ and is not the endpoint of a bridge in $\G$, we may add any edge of $\GG$ having $v$ as an endpoint to $F$. This does not create a cycle as $v$ was isolated.

The remaining case is when all non-singular components of $(F, \omega)$ have excess weight $1$, but $(F, \omega)$ does not have a component with excess weight $2$. In this case, $\GG$ must contain a crossing edge, or else $(F, \omega)$ would already be of highest degree. 
If $(F,\omega)$ itself does not contain any crossing edges, we simply add any crossing edge of $\GG$ to get $(F', \omega')$. This creates a component with excess weight $2$ which has the new edge as the only crossing edge in the path between the weighted edges, hence satisfying $(C3)$. If $(F,\omega)$ contains a crossing edge that has weight 2, we increase its weight to 3 to get $(F', \omega')$. This edge is then the only edge in the path containing all weighted edges in the component, and $(F', \omega')$ satisfies $(C3)$.  

The last subcase to consider is if $(F, \omega)$ contains at least one crossing edge, and all such crossing edges are of weight $1$. Here we must take care to ensure that the parity condition in $(C3)$ is satisfied. We pick any component with a crossing edge and find the nearest (shortest path length) crossing edge to the component's edge of weight $2$ (or to $\ol{0}$ if there is no edge of weight $2$), picking arbitrarily if there is a tie. We weight this edge to get $(F', \omega')$. This ensures that there is exactly one crossing edge in the path containing both weighted edges (or containing the weighted edge and $\ol{0}$) in the component with excess weight $2$. Indeed, if there was another crossing edge in the path, it would have been closer than the edge selected. This exhausts all cases and the result follows.
\end{proof}

For a biconed graph $\G$, we let $h_i$ denote the $i$th entry of the $h$-vector of its underlying graphic matroid. By Lemma \ref{h-vector} and Lemma \ref{BandF}, we have that  $h_i$ is given by the number of monomials in $\F(\red)$ of degree $i$.  From Lemma \ref{lem:multi} and Lemma \ref{lem:pure}, we that $\F(\red)$ is a pure multicomplex. This proves our main result:
\begin{corollary}\label{cor:Stanley}
Stanley's $h$-vector conjecture holds for graphic matroids of biconed graphs.
\end{corollary}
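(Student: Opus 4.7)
The plan is to assemble the proof directly from the lemmas already established. Since Stanley's conjecture asks that the $h$-vector of the graphic matroid ${\mathcal M}(\G)$ be realizable as the degree-sequence of a pure multicomplex, it suffices to exhibit a single multicomplex whose degree-sequence equals $h(\G)$ and which happens to be pure. My proposal is that $\F(\GG)$, viewed as a collection of monomials in the edge variables of $\GG$ as set up at the start of Section~\ref{sec:multi}, is exactly such a multicomplex.

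First I would fix any linear ordering of $E(\G)$; by Lemma~\ref{h-vector}, the entry $h_i$ equals the number of spanning trees $T \in \T(\G)$ having exactly $i$ internally passive edges with respect to that ordering, and this number is independent of the choice of ordering. I would then invoke the specific ordering set up in Section~\ref{sec:biconed} (with $0 < \ol 0 < 1 < \dots < m < \ol 1 < \dots < \ol n$) so as to be able to apply the bijection $\phi = \phi_2 \circ \phi_1$ from Corollary~\ref{cor:mainbijection}. By Lemma~\ref{lem:activity}, this bijection intertwines the statistics of interest: a spanning tree with $i$ internally passive edges corresponds to a monomial in $\F(\GG)$ of degree $i$. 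Combining these two facts gives $h_i = \#\{\Fm \in \F(\GG) : \deg(\Fm) = i\}$.

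Next I would use Lemma~\ref{lem:multi} to conclude that $\F(\GG)$ is a multicomplex (equivalently, an order ideal of monomials). The previous paragraph then immediately shows that the $h$-vector of ${\mathcal M}(\G)$ is an $\mathcal{O}$-sequence. To upgrade this to a \emph{pure} $\mathcal{O}$-sequence, I would invoke Lemma~\ref{lem:pure}, which guarantees that every facet of the multicomplex $\F(\GG)$ has the same cardinality, namely the common degree of the monomials corresponding to completely passive spanning trees. By the definition of purity given in Section~\ref{sec:Prelim}, this completes the verification that $h(\G)$ is a pure $\mathcal{O}$-sequence.

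There is no real obstacle at this stage, since all the substantive work has been carried out in the preceding bijections and the verification of conditions $(C0)$--$(C3)$ in Definition~\ref{def:2erf}. The only care required is bookkeeping: making sure that the ordering used to define internal passivity is the one under which Lemma~\ref{lem:activity} was proved, and making explicit that the bijection $\phi$ is a bijection of \emph{sets} (spanning trees $\leftrightarrow$ monomials) while the degree statistic pulls back to the passivity statistic. Once these identifications are stated, the corollary is an immediate three-line consequence of Lemmas~\ref{h-vector}, \ref{lem:activity}, \ref{lem:multi}, and \ref{lem:pure} together with Corollary~\ref{cor:mainbijection}.
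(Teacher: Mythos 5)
Your proposal is correct and follows exactly the same route as the paper: the authors likewise combine Lemma \ref{h-vector} and Lemma \ref{lem:activity} to identify $h_i$ with the number of degree-$i$ monomials in $\F(\GG)$, and then cite Lemmas \ref{lem:multi} and \ref{lem:pure} for the pure multicomplex structure. Your extra remarks on the choice of ordering are sensible bookkeeping but do not change the argument.
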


\section{Worked Example}\label{sec:Example}
Consider the graph $\G$ depicted in Figure \ref{fig:biconed}, obtained by biconing the graph $G$ in Figure \ref{fig:precone}.  The spanning tree $T_0$ and the graph $\GG$ are depicted in Figures \ref{fig:lexfirst} and \ref{fig:reduced}.  We see that $\G$ has 11 spanning trees (see Figure \ref{fig:trees}) and has $h$-vector given by $(1,2,3,3,2)$.  The collection of birooted forests is depicted in Figure \ref{fig:biroot}, and the complex of $2$-weighted forests of $\GG$ is seen in Figure \ref{fig:multicomp}. Note that the degree sequence of this multicomplex agrees with the $h$-vector.  The poset structure in these figures is determined by the divisibility of monomials in Figure \ref{fig:multicomp}, although one can check that the poset in Figure \ref{fig:trees} is the \emph{internal order} on trees (see Section \ref{sec:Klee-Samper} for more discussion).

\begin{figure}[H]
\begin{multicols}{4}
\renewcommand{\thefigure}{5A}
 \begin{center}
 
          \begin{tikzpicture}[scale=.6]
        \node [draw,fill,circle,inner sep = 0pt, minimum size = .15cm,label=below:{\small1}] (M1) at (-.5,0) {};
        \node [draw,fill,circle,inner sep = 0pt, minimum size = .15cm,label=below:{\small2}] (M2) at (.5,0) {};
        \node [draw,fill,circle,inner sep = 0pt, minimum size = .15cm,label=above:{\small$\ol{1}$}] (N1) at (0,2) {};
        
        
     \draw (M2) -- (N1);

    \end{tikzpicture}
    \captionsetup{width=1.0\linewidth}
  \captionof{figure}{A graph $G$}
 \label{fig:precone}
 \end{center}

\vfill\null
\columnbreak
\renewcommand{\thefigure}{5B}
 \begin{center}

      \begin{tikzpicture}[scale=.6]
        \node [draw,fill,circle,inner sep = 0pt, minimum size = .15cm,label=below:{\small1}] (M1) at (-.5,0) {};
        \node [draw,fill,circle,inner sep = 0pt, minimum size = .15cm,label=below:{\small2}] (M2) at (.5,0) {};
        \node [draw,fill,circle,inner sep = 0pt, minimum size = .15cm,label=above:{\small$\ol{1}$}] (N1) at (0,2) {};
        
        \node [draw,fill,circle,inner sep = 0pt, minimum size = .15cm,label=below:{\small0}] (C0) at (-1,-1.35) {};
        \node [draw,fill,circle,inner sep = 0pt, minimum size = .15cm,label=above:{\small$\ol{0}$}] (C1) at (-1,3.35) {};
        
    \draw (C0) -- (C1);   
    \draw (C0) -- (M1);
    \draw (C0) -- (M2);
    \draw (C1) -- (N1);
     \draw (M2) -- (N1);
    \draw (C1) -- (M1);

    \end{tikzpicture}
       \captionsetup{width=1.0\linewidth}
  \captionof{figure}{The biconed $\G$ with $A=\{1,2\}$ and $B=\{1,\ol{1}\}$}
 \label{fig:biconed}
 \end{center}
 
 \vfill\null
\columnbreak
\renewcommand{\thefigure}{5C}        
 \begin{center}

        \begin{tikzpicture}[scale=.6]
        \node [draw,fill,circle,inner sep = 0pt, minimum size = .15cm,label=below:{\small1}] (M1) at (-.5,0) {};
        \node [draw,fill,circle,inner sep = 0pt, minimum size = .15cm,label=below:{\small2}] (M2) at (.5,0) {};
        \node [draw,fill,circle,inner sep = 0pt, minimum size = .15cm,label=above:{\small$\ol{1}$}] (N1) at (0,2) {};
        
        \node [draw,fill,circle,inner sep = 0pt, minimum size = .15cm,label=below:{\small0}] (C0) at (-1,-1.35) {};
        \node [draw,fill,circle,inner sep = 0pt, minimum size = .15cm,label=above:{\small$\ol{0}$}] (C1) at (-1,3.35) {};
        
    \draw (C0) -- (C1);   
    \draw (C0) -- (M1);
    \draw (C0) -- (M2);
    \draw (C1) -- (N1);

    \end{tikzpicture}
      \captionsetup{width=1.0\linewidth}
  \captionof{figure}{The lex-first spanning tree $T_0$}
   \label{fig:lexfirst}
  \end{center}
  
 \vfill\null
\columnbreak
\renewcommand{\thefigure}{5D}        
 \begin{center}
           \begin{tikzpicture}[scale=.6]
        \node [draw,fill,circle,inner sep = 0pt, minimum size = .15cm,label=below:{\small1}] (M1) at (-.5,0) {};
        \node [draw,fill,circle,inner sep = 0pt, minimum size = .15cm,label=below:{\small2}] (M2) at (.5,0) {};
        \node [draw,fill,circle,inner sep = 0pt, minimum size = .15cm,label=above:{\small$\ol{1}$}] (N1) at (0,2) {};
        
        \node [draw,fill,circle,inner sep = 0pt, minimum size = .15cm,label=above:{\small$\ol{0}$}] (C1) at (-1,3.35) {};
        
     \draw (M2) -- (N1);
    \draw (C1) -- (M1);

    \end{tikzpicture}
      \captionsetup{width=1.0\linewidth}
  \captionof{figure}{The `reduced ' graph $\G_{\text{red}}$}
   \label{fig:reduced}
  \end{center}

\end{multicols}
\end{figure}

\begin{figure}[H]
\renewcommand{\thefigure}{6A}
 \begin{center}

   \scalebox{.75}{         \begin{tikzpicture}[scale=.6]
        \node (1) {        \begin{tikzpicture}[scale=.6,anchor=center]
        \node [draw,fill,circle,inner sep = 0pt, minimum size = .15cm,label=below:{\small1}] (M1) at (-.5,0) {};
        \node [draw,fill,circle,inner sep = 0pt, minimum size = .15cm,label=below:{\small2}] (M2) at (.5,0) {};
        \node [draw,fill,circle,inner sep = 0pt, minimum size = .15cm,label=above:{\small$\ol{1}$}] (N1) at (0,2) {};
        
        \node [draw,fill,circle,inner sep = 0pt, minimum size = .15cm,label=below:{\small0}] (C0) at (-1,-1.35) {};
        \node [draw,fill,circle,inner sep = 0pt, minimum size = .15cm,label=above:{\small$\ol{0}$}] (C1) at (-1,3.35) {};
        
    \draw (C0) -- (C1);   
    \draw (C0) -- (M1);
    \draw (C0) -- (M2);
    \draw (C1) -- (N1);

    \end{tikzpicture}};
        \node[xshift=70, yshift=70] (y) {        \begin{tikzpicture}[scale=.6]
        \node [draw,fill,circle,inner sep = 0pt, minimum size = .15cm,label=below:{\small1}] (M1) at (-.5,0) {};
        \node [draw,fill,circle,inner sep = 0pt, minimum size = .15cm,label=below:{\small2}] (M2) at (.5,0) {};
        \node [draw,fill,circle,inner sep = 0pt, minimum size = .15cm,label=above:{\small$\ol{1}$}] (N1) at (0,2) {};
        
        \node [draw,fill,circle,inner sep = 0pt, minimum size = .15cm,label=below:{\small0}] (C0) at (-1,-1.35) {};
        \node [draw,fill,circle,inner sep = 0pt, minimum size = .15cm,label=above:{\small$\ol{0}$}] (C1) at (-1,3.35) {};
        
    \draw (C0) -- (C1);   
    \draw (C0) -- (M1);
    \draw (C0) -- (M2);
    \draw (N1) -- (M2);

    \end{tikzpicture}};
        \node[xshift=-70, yshift=70] (x) {        \begin{tikzpicture}[scale=.6]
        \node [draw,fill,circle,inner sep = 0pt, minimum size = .15cm,label=below:{\small1}] (M1) at (-.5,0) {};
        \node [draw,fill,circle,inner sep = 0pt, minimum size = .15cm,label=below:{\small2}] (M2) at (.5,0) {};
        \node [draw,fill,circle,inner sep = 0pt, minimum size = .15cm,label=above:{\small$\ol{1}$}] (N1) at (0,2) {};
        
        \node [draw,fill,circle,inner sep = 0pt, minimum size = .15cm,label=below:{\small0}] (C0) at (-1,-1.35) {};
        \node [draw,fill,circle,inner sep = 0pt, minimum size = .15cm,label=above:{\small$\ol{0}$}] (C1) at (-1,3.35) {};
        
    \draw (C0) -- (C1);   
    \draw (C1) -- (M1);
    \draw (C0) -- (M2);
    \draw (C1) -- (N1);

    \end{tikzpicture}};
        
        \node[xshift=-140, yshift=140] (x2) {        \begin{tikzpicture}[scale=.6]
        \node [draw,fill,circle,inner sep = 0pt, minimum size = .15cm,label=below:{\small1}] (M1) at (-.5,0) {};
        \node [draw,fill,circle,inner sep = 0pt, minimum size = .15cm,label=below:{\small2}] (M2) at (.5,0) {};
        \node [draw,fill,circle,inner sep = 0pt, minimum size = .15cm,label=above:{\small$\ol{1}$}] (N1) at (0,2) {};
        
        \node [draw,fill,circle,inner sep = 0pt, minimum size = .15cm,label=below:{\small0}] (C0) at (-1,-1.35) {};
        \node [draw,fill,circle,inner sep = 0pt, minimum size = .15cm,label=above:{\small$\ol{0}$}] (C1) at (-1,3.35) {};
        
    \draw (C0) -- (M1);   
    \draw (C1) -- (M1);
    \draw (C0) -- (M2);
    \draw (C1) -- (N1);

    \end{tikzpicture}};
        \node[xshift=0, yshift=140] (xy) {        \begin{tikzpicture}[scale=.6]
        \node [draw,fill,circle,inner sep = 0pt, minimum size = .15cm,label=below:{\small1}] (M1) at (-.5,0) {};
        \node [draw,fill,circle,inner sep = 0pt, minimum size = .15cm,label=below:{\small2}] (M2) at (.5,0) {};
        \node [draw,fill,circle,inner sep = 0pt, minimum size = .15cm,label=above:{\small$\ol{1}$}] (N1) at (0,2) {};
        
        \node [draw,fill,circle,inner sep = 0pt, minimum size = .15cm,label=below:{\small0}] (C0) at (-1,-1.35) {};
        \node [draw,fill,circle,inner sep = 0pt, minimum size = .15cm,label=above:{\small$\ol{0}$}] (C1) at (-1,3.35) {};
        
    \draw (C0) -- (C1);   
    \draw (N1) -- (M2);
    \draw (C0) -- (M2);
    \draw (C1) -- (M1);

    \end{tikzpicture}};
        \node[xshift=140, yshift=140] (y2) {        \begin{tikzpicture}[scale=.6]
        \node [draw,fill,circle,inner sep = 0pt, minimum size = .15cm,label=below:{\small1}] (M1) at (-.5,0) {};
        \node [draw,fill,circle,inner sep = 0pt, minimum size = .15cm,label=below:{\small2}] (M2) at (.5,0) {};
        \node [draw,fill,circle,inner sep = 0pt, minimum size = .15cm,label=above:{\small$\ol{1}$}] (N1) at (0,2) {};
        
        \node [draw,fill,circle,inner sep = 0pt, minimum size = .15cm,label=below:{\small0}] (C0) at (-1,-1.35) {};
        \node [draw,fill,circle,inner sep = 0pt, minimum size = .15cm,label=above:{\small$\ol{0}$}] (C1) at (-1,3.35) {};
        
    \draw (C0) -- (C1);   
    \draw (C0) -- (M1);
    \draw (N1) -- (M2);
    \draw (C1) -- (N1);

    \end{tikzpicture}};
        
        \node[xshift=-70, yshift=210] (x2y) {        \begin{tikzpicture}[scale=.6]
        \node [draw,fill,circle,inner sep = 0pt, minimum size = .15cm,label=below:{\small1}] (M1) at (-.5,0) {};
        \node [draw,fill,circle,inner sep = 0pt, minimum size = .15cm,label=below:{\small2}] (M2) at (.5,0) {};
        \node [draw,fill,circle,inner sep = 0pt, minimum size = .15cm,label=above:{\small$\ol{1}$}] (N1) at (0,2) {};
        
        \node [draw,fill,circle,inner sep = 0pt, minimum size = .15cm,label=below:{\small0}] (C0) at (-1,-1.35) {};
        \node [draw,fill,circle,inner sep = 0pt, minimum size = .15cm,label=above:{\small$\ol{0}$}] (C1) at (-1,3.35) {};
        
    \draw (M2) -- (N1);   
    \draw (C0) -- (M1);
    \draw (C0) -- (M2);
    \draw (C1) -- (M1);

    \end{tikzpicture}};
        \node[xshift=70, yshift=210] (xy2) {        \begin{tikzpicture}[scale=.6]
        \node [draw,fill,circle,inner sep = 0pt, minimum size = .15cm,label=below:{\small1}] (M1) at (-.5,0) {};
        \node [draw,fill,circle,inner sep = 0pt, minimum size = .15cm,label=below:{\small2}] (M2) at (.5,0) {};
        \node [draw,fill,circle,inner sep = 0pt, minimum size = .15cm,label=above:{\small$\ol{1}$}] (N1) at (0,2) {};
        
        \node [draw,fill,circle,inner sep = 0pt, minimum size = .15cm,label=below:{\small0}] (C0) at (-1,-1.35) {};
        \node [draw,fill,circle,inner sep = 0pt, minimum size = .15cm,label=above:{\small$\ol{0}$}] (C1) at (-1,3.35) {};
        
    \draw (C0) -- (C1);   
    \draw (C1) -- (M1);
    \draw (N1) -- (M2);
    \draw (C1) -- (N1);

    \end{tikzpicture}};
        \node[xshift=210, yshift=210] (y3) {        \begin{tikzpicture}[scale=.6]
        \node [draw,fill,circle,inner sep = 0pt, minimum size = .15cm,label=below:{\small1}] (M1) at (-.5,0) {};
        \node [draw,fill,circle,inner sep = 0pt, minimum size = .15cm,label=below:{\small2}] (M2) at (.5,0) {};
        \node [draw,fill,circle,inner sep = 0pt, minimum size = .15cm,label=above:{\small$\ol{1}$}] (N1) at (0,2) {};
        
        \node [draw,fill,circle,inner sep = 0pt, minimum size = .15cm,label=below:{\small0}] (C0) at (-1,-1.35) {};
        \node [draw,fill,circle,inner sep = 0pt, minimum size = .15cm,label=above:{\small$\ol{0}$}] (C1) at (-1,3.35) {};
        
    \draw (N1) -- (M2);   
    \draw (C0) -- (M1);
    \draw (C0) -- (M2);
    \draw (C1) -- (N1);

    \end{tikzpicture}};
        
        \node[xshift=0, yshift=280] (x2y2) {        \begin{tikzpicture}[scale=.6]
        \node [draw,fill,circle,inner sep = 0pt, minimum size = .15cm,label=below:{\small1}] (M1) at (-.5,0) {};
        \node [draw,fill,circle,inner sep = 0pt, minimum size = .15cm,label=below:{\small2}] (M2) at (.5,0) {};
        \node [draw,fill,circle,inner sep = 0pt, minimum size = .15cm,label=above:{\small$\ol{1}$}] (N1) at (0,2) {};
        
        \node [draw,fill,circle,inner sep = 0pt, minimum size = .15cm,label=below:{\small0}] (C0) at (-1,-1.35) {};
        \node [draw,fill,circle,inner sep = 0pt, minimum size = .15cm,label=above:{\small$\ol{0}$}] (C1) at (-1,3.35) {};
        
    \draw (C1) -- (M1);   
    \draw (C0) -- (M1);
    \draw (N1) -- (M2);
    \draw (C1) -- (N1);

    \end{tikzpicture}};
        \node[xshift=140, yshift=280] (xy3) {        \begin{tikzpicture}[scale=.6]
        \node [draw,fill,circle,inner sep = 0pt, minimum size = .15cm,label=below:{\small1}] (M1) at (-.5,0) {};
        \node [draw,fill,circle,inner sep = 0pt, minimum size = .15cm,label=below:{\small2}] (M2) at (.5,0) {};
        \node [draw,fill,circle,inner sep = 0pt, minimum size = .15cm,label=above:{\small$\ol{1}$}] (N1) at (0,2) {};
        
        \node [draw,fill,circle,inner sep = 0pt, minimum size = .15cm,label=below:{\small0}] (C0) at (-1,-1.35) {};
        \node [draw,fill,circle,inner sep = 0pt, minimum size = .15cm,label=above:{\small$\ol{0}$}] (C1) at (-1,3.35) {};
        
    \draw (C1) -- (M1);   
    \draw (N1) -- (M2);
    \draw (C0) -- (M2);
    \draw (C1) -- (N1);

    \end{tikzpicture}};
        
        \draw[thick] (1) -- (x);
        \draw[thick] (1) -- (y);
        
        \draw[thick] (x) -- (x2);
        \draw[thick] (x) -- (xy);
        \draw[thick] (y) -- (xy);
        \draw[thick] (y) -- (y2);
        
        \draw[thick] (x2) -- (x2y);
        \draw[thick] (xy) -- (x2y);
        \draw[thick] (xy) -- (xy2);
        \draw[thick] (y2) -- (y3);
        \draw[thick] (xy2) -- (y2);
        
        \draw[thick] (x2y) -- (x2y2);
        \draw[thick] (xy2) -- (x2y2);
        \draw[thick] (xy2) -- (xy3);
        \draw[thick] (y3) -- (xy3);
    \end{tikzpicture}}
    \captionsetup{width=1.0\linewidth}
  \captionof{figure}{Spanning trees of $\G$}
 \label{fig:trees}
 \end{center}
\end{figure}

\begin{figure}[H]    
\renewcommand{\thefigure}{6B}
 \begin{center}
 
 \scalebox{.75}{        \begin{tikzpicture}[scale=.6]
        \node (1) {        \begin{tikzpicture}[scale=.6,anchor=center]
        \node [draw,fill,circle,inner sep = 0pt, minimum size = .25cm,label=below:{\small1}] (M1) at (-.5,0) {};
        \node [draw,fill,circle,inner sep = 0pt, minimum size = .25cm,label=below:{\small2}] (M2) at (.5,0) {};
        \node [draw,fill,circle,inner sep = 0pt, minimum size = .25cm,label=above:{\small$\ol{1}$}] (N1) at (0,2) {};
        
        \node [draw,fill,circle,inner sep = 0pt, minimum size = .25cm,label=above:{\small$\ol{0}$}] (C1) at (-1,3.35) {};
        

    \end{tikzpicture}};
        \node[xshift=70, yshift=70] (y) {        \begin{tikzpicture}[scale=.6]
        \node [draw,fill,circle,inner sep = 0pt, minimum size = .25cm,label=below:{\small1}] (M1) at (-.5,0) {};
        \node [draw,fill,circle,inner sep = 0pt, minimum size = .25cm,label=below:{\small2}] (M2) at (.5,0) {};
        \node [draw,fill,circle,inner sep = 0pt, minimum size = .15cm,label=above:{\small$\ol{1}$}] (N1) at (0,2) {};
        
        \node [draw,fill,circle,inner sep = 0pt, minimum size = .25cm,label=above:{\small$\ol{0}$}] (C1) at (-1,3.35) {};
        
    \draw (N1) -- (M2);

    \end{tikzpicture}};
        \node[xshift=-70, yshift=70] (x) {        \begin{tikzpicture}[scale=.6]
        \node [draw,fill,circle,inner sep = 0pt, minimum size = .15cm,label=below:{\small1}] (M1) at (-.5,0) {};
        \node [draw,fill,circle,inner sep = 0pt, minimum size = .25cm,label=below:{\small2}] (M2) at (.5,0) {};
        \node [draw,fill,circle,inner sep = 0pt, minimum size = .25cm,label=above:{\small$\ol{1}$}] (N1) at (0,2) {};
        
        \node [draw,fill,circle,inner sep = 0pt, minimum size = .25cm,label=above:{\small$\ol{0}$}] (C1) at (-1,3.35) {};
        
    \draw (C1) -- (M1);

    \end{tikzpicture}};
        
        \node[xshift=-140, yshift=140] (x2) {        \begin{tikzpicture}[scale=.6]
        \node [draw,fill,circle,inner sep = 0pt, minimum size = .25cm,label=below:{\small1}] (M1) at (-.5,0) {};
        \node [draw,fill,circle,inner sep = 0pt, minimum size = .25cm,label=below:{\small2}] (M2) at (.5,0) {};
        \node [draw,fill,circle,inner sep = 0pt, minimum size = .25cm,label=above:{\small$\ol{1}$}] (N1) at (0,2) {};
        
        \node [draw,fill,circle,inner sep = 0pt, minimum size = .25cm,label=above:{\small$\ol{0}$}] (C1) at (-1,3.35) {};
        
    \draw (C1) -- (M1);

    \end{tikzpicture}};
        \node[xshift=0, yshift=140] (xy) {        \begin{tikzpicture}[scale=.6]
        \node [draw,fill,circle,inner sep = 0pt, minimum size = .15cm,label=below:{\small1}] (M1) at (-.5,0) {};
        \node [draw,fill,circle,inner sep = 0pt, minimum size = .25cm,label=below:{\small2}] (M2) at (.5,0) {};
        \node [draw,fill,circle,inner sep = 0pt, minimum size = .15cm,label=above:{\small$\ol{1}$}] (N1) at (0,2) {};
        
        \node [draw,fill,circle,inner sep = 0pt, minimum size = .25cm,label=above:{\small$\ol{0}$}] (C1) at (-1,3.35) {};
        
    \draw (N1) -- (M2);
    \draw (C1) -- (M1);

    \end{tikzpicture}};
        \node[xshift=140, yshift=140] (y2) {        \begin{tikzpicture}[scale=.6]
        \node [draw,fill,circle,inner sep = 0pt, minimum size = .25cm,label=below:{\small1}] (M1) at (-.5,0) {};
        \node [draw,fill,circle,inner sep = 0pt, minimum size = .15cm,label=below:{\small2}] (M2) at (.5,0) {};
        \node [draw,fill,circle,inner sep = 0pt, minimum size = .25cm,label=above:{\small$\ol{1}$}] (N1) at (0,2) {};
        
        \node [draw,fill,circle,inner sep = 0pt, minimum size = .25cm,label=above:{\small$\ol{0}$}] (C1) at (-1,3.35) {};
        
    \draw (N1) -- (M2);

    \end{tikzpicture}};
        
        \node[xshift=-70, yshift=210] (x2y) {        \begin{tikzpicture}[scale=.6]
        \node [draw,fill,circle,inner sep = 0pt, minimum size = .25cm,label=below:{\small1}] (M1) at (-.5,0) {};
        \node [draw,fill,circle,inner sep = 0pt, minimum size = .25cm,label=below:{\small2}] (M2) at (.5,0) {};
        \node [draw,fill,circle,inner sep = 0pt, minimum size = .15cm,label=above:{\small$\ol{1}$}] (N1) at (0,2) {};
        
        \node [draw,fill,circle,inner sep = 0pt, minimum size = .25cm,label=above:{\small$\ol{0}$}] (C1) at (-1,3.35) {};
        
    \draw (N1) -- (M2);
    \draw (C1) -- (M1);

    \end{tikzpicture}};
        \node[xshift=70, yshift=210] (xy2) {        \begin{tikzpicture}[scale=.6]
        \node [draw,fill,circle,inner sep = 0pt, minimum size = .15cm,label=below:{\small1}] (M1) at (-.5,0) {};
        \node [draw,fill,circle,inner sep = 0pt, minimum size = .15cm,label=below:{\small2}] (M2) at (.5,0) {};
        \node [draw,fill,circle,inner sep = 0pt, minimum size = .25cm,label=above:{\small$\ol{1}$}] (N1) at (0,2) {};
        
        \node [draw,fill,circle,inner sep = 0pt, minimum size = .25cm,label=above:{\small$\ol{0}$}] (C1) at (-1,3.35) {};
        
    \draw (C1) -- (M1);
    \draw (N1) -- (M2);

    \end{tikzpicture}};
        \node[xshift=210, yshift=210] (y3) {        \begin{tikzpicture}[scale=.6]
        \node [draw,fill,circle,inner sep = 0pt, minimum size = .25cm,label=below:{\small1}] (M1) at (-.5,0) {};
        \node [draw,fill,circle,inner sep = 0pt, minimum size = .25cm,label=below:{\small2}] (M2) at (.5,0) {};
        \node [draw,fill,circle,inner sep = 0pt, minimum size = .25cm,label=above:{\small$\ol{1}$}] (N1) at (0,2) {};
        
        \node [draw,fill,circle,inner sep = 0pt, minimum size = .25cm,label=above:{\small$\ol{0}$}] (C1) at (-1,3.35) {};
        
    \draw (N1) -- (M2);   

    \end{tikzpicture}};
        
        \node[xshift=0, yshift=280] (x2y2) {        \begin{tikzpicture}[scale=.6]
        \node [draw,fill,circle,inner sep = 0pt, minimum size = .25cm,label=below:{\small1}] (M1) at (-.5,0) {};
        \node [draw,fill,circle,inner sep = 0pt, minimum size = .15cm,label=below:{\small2}] (M2) at (.5,0) {};
        \node [draw,fill,circle,inner sep = 0pt, minimum size = .25cm,label=above:{\small$\ol{1}$}] (N1) at (0,2) {};
        
        \node [draw,fill,circle,inner sep = 0pt, minimum size = .25cm,label=above:{\small$\ol{0}$}] (C1) at (-1,3.35) {};
        
    \draw (C1) -- (M1);   
    \draw (N1) -- (M2);

    \end{tikzpicture}};
        \node[xshift=140, yshift=280] (xy3) {        \begin{tikzpicture}[scale=.6]
        \node [draw,fill,circle,inner sep = 0pt, minimum size = .15cm,label=below:{\small1}] (M1) at (-.5,0) {};
        \node [draw,fill,circle,inner sep = 0pt, minimum size = .25cm,label=below:{\small2}] (M2) at (.5,0) {};
        \node [draw,fill,circle,inner sep = 0pt, minimum size = .25cm,label=above:{\small$\ol{1}$}] (N1) at (0,2) {};
        
        \node [draw,fill,circle,inner sep = 0pt, minimum size = .25cm,label=above:{\small$\ol{0}$}] (C1) at (-1,3.35) {};
        
    \draw (C1) -- (M1);   
    \draw (N1) -- (M2);

    \end{tikzpicture}};
        
        \draw[thick] (1) -- (x);
        \draw[thick] (1) -- (y);
        
        \draw[thick] (x) -- (x2);
        \draw[thick] (x) -- (xy);
        \draw[thick] (y) -- (xy);
        \draw[thick] (y) -- (y2);
        
        \draw[thick] (x2) -- (x2y);
        \draw[thick] (xy) -- (x2y);
        \draw[thick] (xy) -- (xy2);
        \draw[thick] (y2) -- (y3);
        \draw[thick] (xy2) -- (y2);
        
        \draw[thick] (x2y) -- (x2y2);
        \draw[thick] (xy2) -- (x2y2);
        \draw[thick] (xy2) -- (xy3);
        \draw[thick] (y3) -- (xy3);
    \end{tikzpicture}}

    \captionsetup{width=1.0\linewidth}
  \captionof{figure}{Bi-rooted forests of $\GG$}
 \label{fig:biroot}
 \end{center}
\end{figure}

\begin{figure}[H]
\renewcommand{\thefigure}{6C} 
 \begin{center}
 
  \scalebox{.75}{        \begin{tikzpicture}[scale=.6]
        \node (1) {\input{figures/wf1}};
        \node[xshift=70, yshift=70] (y) {\input{figures/wfy}};
        \node[xshift=-70, yshift=70] (x) {\input{figures/wfx}};
        
        \node[xshift=-140, yshift=140] (x2) {\input{figures/wfx2}};
        \node[xshift=0, yshift=140] (xy) {\input{figures/wfxy}};
        \node[xshift=140, yshift=140] (y2) {\input{figures/wfy2}};
        
        \node[xshift=-70, yshift=210] (x2y) {\input{figures/wfx2y}};
        \node[xshift=70, yshift=210] (xy2) {\input{figures/wfxy2}};
        \node[xshift=210, yshift=210] (y3) {\input{figures/wfy3}};
        
        \node[xshift=0, yshift=280] (x2y2) {\input{figures/wfx2y2}};
        \node[xshift=140, yshift=280] (xy3) {\input{figures/wfxy3}};
        
        \draw[thick] (1) -- (x);
        \draw[thick] (1) -- (y);
        
        \draw[thick] (x) -- (x2);
        \draw[thick] (x) -- (xy);
        \draw[thick] (y) -- (xy);
        \draw[thick] (y) -- (y2);
        
        \draw[thick] (x2) -- (x2y);
        \draw[thick] (xy) -- (x2y);
        \draw[thick] (xy) -- (xy2);
        \draw[thick] (y2) -- (y3);
        \draw[thick] (xy2) -- (y2);
        
        \draw[thick] (x2y) -- (x2y2);
        \draw[thick] (xy2) -- (x2y2);
        \draw[thick] (xy2) -- (xy3);
        \draw[thick] (y3) -- (xy3);
    \end{tikzpicture}}
    \captionsetup{width=1.0\linewidth}
  \captionof{figure}{Multicomplex of 2-weighted forests of $\GG$}
 \label{fig:multicomp}
 \end{center}
\end{figure}

 
 

\section{Klee-Samper's strengthening of Stanley's conjecture}\label{sec:Klee-Samper}

Our approach to proving Stanley's conjecture for the graphic matroid of a biconed graph $G$ involved choosing an ordering on the edges of $G$ and singling out a `smallest' spanning tree ${\mathcal T}_0$.  We then used the remaining edges $E(G) \backslash {\mathcal T}_0$ as the variables for the multicomplex that realized the $h$-vector of $G$.  In \cite{KleeSamper2015} Klee and Samper propose a similar approach to proving Stanley's conjecture for all matroids, based on lexicographic shellability of their independence complexes.  For this we recall some definitions.


\begin{definition}[\cite{KleeSamper2015}, Definition 3.9]

A \emph{based matroid} is a triple $(\Delta, B, <)$, where $\Delta$ is a matroid, $B$ is a basis of $\Delta$ and $<$ is a total order of $E(\Delta)-B$. For an independent set $I$ such that $I \cap B = \emptyset$, let $\Gamma_I$ be the matroid whose elements are the subsets $U$ of $B$ such that $U \cup I \in \Delta$. Two based matroids $(\Delta, B, <)$ and $(\Delta', B', <')$ are isomorphic if there is a matroid isomorphism $f:\Delta\rightarrow \Delta'$ such that $f(B)=B'$ and $f$ is order preserving on $E(\Delta)-B$.
\end{definition}

The notion of a based matroid is inspired by the fact that any ordering of a the ground set of a matroid $\Delta$ provides a lexicographic shelling of its independence complex. This leads to a decomposition of the $h$-vector of a matroid that suggests an inductive procedure to construct a pure multicomplex. Based on these notions, Klee and Samper formulate the following.

\begin{conjecture}[\cite{KleeSamper2015}, Conjecture 3.10]\label{con:KSconjecture}
Let $d$ be a fixed integer and let $\mathcal{A}^d$ be the family of based matroids of rank $d$. Then there exists a map $\mathcal{O}$ from $\mathcal{A}^d$ to the family of pure order ideals such that the following conditions hold for every based matroid $(\Delta, B, <)$.

\begin{enumerate}
\item The variables of $\mathcal{O}(\Delta, B, <)$ are $\{x_i| i \in E(\Delta) - B\}$.

\item Every monomial in $\mathcal{O}(\Delta, B, <)$ is supported on a set of the form $\{x_i| i \in I\}$ for some independent set $I$ of $\Delta $ with $I \cap B = \emptyset$.

\item For each independent set $I$ that is disjoint from $B$, there are exactly $h_j(\Gamma_I)$ monomials in $\mathcal{O}(\Delta, B, <)$ with degree $\abs{I}+j$ and support $\{x_i|i \in I\}$.

\item For each independent set $I$ that is disjoint from $B$, the restriction of $\mathcal{O}(\Delta, B, <)$ to the variables $\{x_i| i \in I\}$ is $\mathcal{O}(\Delta|_{B \cup I}, B, <)$.

\item If $(\Delta', B', <')$ is a based matroid and $f:(\Delta, B, <)\rightarrow (\Delta', B', <')$ is an isomorphism, then $\mathcal{O}(\Delta,B,<)$ is naturally isomorphic to $\mathcal{O}(\Delta',B',<')$ by relabeling the index of each variable in $(\Delta, B, <)$ with its image under $f$.
\end{enumerate}
\end{conjecture}

The authors of \cite{KleeSamper2015} show that Conjecture \ref{con:KSconjecture} implies Stanley's Conjecture for all matroids.  We will describe how our constructions lead to a proof of all parts of Conjecture \ref{con:KSconjecture} aside from Condition (5), for the class of graphic matroids of biconed graphs.

First recall that in our construction of biconed graphs, we have specified an order $<$ on the edge set and a lexicographically first spanning tree $T_0$.  Hence the graphic matroids coming from such graphs are naturally based matroids. In Corollary \ref{cor:mainbijection}, we defined a map $\phi$ from the spanning trees of a biconed graph $\G$ to the set $\F(\GG)$ of $2$-weighted forests of $\GG$, and in Lemma \ref{lem:pure} we saw that $\F(\GG)$ is a pure ordeal ideal.  Hence we can think of $\phi$ as a map from the set of based graphic matroids coming from biconed graphs to the set of pure order ideals.   With this in place we can prove the following.

\begin{theorem}\label{thm:KleeSamper}
Let ${\mathcal B}$ denote the set of based matroids arising from biconed graphs as described above.  Then $\phi$, thought of as a map from ${\mathcal B}$ to the family of pure ordeal ideals, satisfies conditions $(1)-(4)$ listed in Conjecture \ref{con:KSconjecture}.
\end{theorem}

\begin{proof}
Let $(\Delta, T_0, <)$ be the based graphic matroid associated to a biconed graph $\G$. By construction, the variables of $\phi(\Delta, T_0, <)$ are the edges of $\G$ that are not included in the lex-first tree $T_0$.  In addition, every monomial in $\phi(\Delta, T_0, <)$ corresponds to a multiset of $T \backslash T_0$, where $T$ is the set of edges of some spanning tree of $\G$. Hence the first two items of Conjecture \ref{con:KSconjecture} are satisfied.

For item $(3)$, suppose $I$ is an independent set in $\G$ that is disjoint from $T_0$.  Then $\Gamma_I$ is the graphic matroid of the graph obtained by restricting to the edges $T_0 \cup I$ and then contracting the edges of $I$.  This correspondence provides a bijection between 1) the spanning trees of $\G$ which contain $I$ and which are contained in $T_0 \cup I$, and 2) the spanning trees of $\Gamma_I$ .  It remains to show that this map decreases passivity  by $\abs{I}$, since $h_j(\Gamma_I)$ is the number of spanning trees of $\Gamma_I$ with passivity $j$. Since $I$ is disjoint from $T_0$, we have from Lemma \ref{lem:passive} that each edge contracted under the map is internally passive in $\G$. The remaining edges are passive exactly when they can be replaced by a smaller edge to again obtain a spanning tree.  However all edges in $I$ are larger than all edges of $T_0$, so contracting or deleting edges which are not contained in $T_0$ does not affect passivity of the edges contained in $T_0$. Thus, the number of passive edges drops by $\abs{I}$ under this map.

For item $(4)$, we want to show that for an independent set $I$ disjoint from $T_0$, the multicomplex obtained from $\F(\GG)$ by restricting to the variables $x_i$ for $i \in I$ is the same as the multicomplex obtained by applying $\phi$ to the graph $\G|_{T_0 \cup I}$, the restriction of $\G$ to $T_0 \cup I$. In other words, deleting edges of $\G$ not contained in $T_0 \cup I$ doesn't affect the monomials whose support is contained in $I$. This follows since the monomial associated with a spanning tree in $\G$ can be computed using only knowledge of $T_0$ and the spanning tree in question. 
In particular it does not depend on any edges in $E(\Delta) \setminus B$, so deleting these edges in the underlying matroid still gives the same monomial for each independent set common to both matroids.
\end{proof}



We have thus far been unable to establish Condition (5) of Conjecture \ref{con:KSconjecture} for our class of matroids, as it seems difficult to determine conditions under which two based matroids coming from different biconed graphs are isomorphic. On the other hand, it is not clear if a weaker statement is sufficient in order to apply the constructions to Stanley's Conjecture.

As is explained in \cite{HeaSam}, if graphic matroids of biconed graphs did indeed satisfy all conditions of Conjecture \ref{con:KSconjecture} then this would imply that the poset of divisibility of the pure order ideal associated to a biconed graph $\G$ is an extension of a certain partial order on bases defined by Las Vergnas in \cite{Las}.  To recall this notion, suppose $<$ is an ordering of the ground set of a matroid ${\mathcal M}$. We define a partial order on the bases of ${\mathcal M}$ given by inclusion of its internally passive elements, and call this the \emph{internal order} of the (ordered) matroid ${\mathcal M}$.  One can check that the poset depcited in Figure $6$A is in fact the internal order of the matroid with the given ordering on the edges of the graph.

For the case of $K_4$ (which is an example of a biconed graph), it is known that the internal order of the underlying graphic matroid is not the face poset of any multicomplex (see for instance \cite{Dal}).  Hence the poset of divisibility corresponding to the multicomplex $\F(\red)$  that we create for $K_4$ will strictly contain the poset associated to the internal order.

\section{Open questions and future directions}\label{sec:Future}

In this section, we discuss questions that arise from our study of biconed graphs which are possible directions for future research. 

\begin{question}
What is the M\"obius coinvariant $\mu^\perp(K_{n_1, n_2, \dots, n_\ell})$ of a complete multipartite graph?
\end{question}

Recall that the M\"obius conivariant $\mu^\perp(G)$ of a graph $G$ is defined to be $|\mu_{L({\mathcal M}(G))^*}|$, the M\"obius invariant of the lattice of flats of the matroid dual to the graphic matroid of $G$.  It is known that $\mu^\perp(G)$ is equal to the rank of the reduced homology of the independence complex of ${\mathcal M}(G)$ and also equal to the Tutte evaluation $T_G(0,1)$, and hence equal to the last nonzero entry in the $h$-vector of the underlying matroid (counting the number of spanning trees with zero internal activity). In \cite{KookLee2018}, the authors found closed formulas for the M\"{o}bius coinvariants of complete bipartite graphs by counting certain edge and vertex rooted forests using Hermite polynomials. From our results, we see that determining the M\"{o}bius coinvariant of a biconed graph is equivalent to counting its maximal $2$-weighted forests, those which have in every non-singular component at least $1$-edge root and exactly one component with excess weight $2$. A careful count of such structures would then lead to a combinatorial formula the M\"{o}bius coinvariants of these graphs. A potential method is to take an approach similar to that in \cite{KookLee2018}, creating a structure with a blend of edge weighting and vertex rooting, in order to count the number of maximal $2$-weighted forests.

\begin{question} \label{Ferrers}
Can we use our characterization of the $h$-vectors of biconed graphs to get a nice formula for the case of Ferrers graphs?
\end{question}

Recall from \cite{EhrenborgvanWilligenburg2004} that a \emph{Ferrers graph} is a bipartite graph with vertex set partition $U = \{u_0, \dots, u_n\}$ and $V = \{v_0, \dots, v_m\}$ satisfying
\begin{itemize}
    \item If $(u_i,v_j)$ is an edge then so is $(u_p,v_q)$ for all $0 \leq p \leq i$ and $0 \leq q \leq j$,
    
    \item $(u_0, v_m)$ and $(u_n, v_0)$ are both edges.
\end{itemize}

In particular such a graph is biconed. For a Ferrers graph $G$ we have the associated partition $\lambda = (\lambda_0, \lambda_1, \dots, \lambda_n)$, where $\lambda_i$ is the degree of the vertex $u_i$.  The associated Ferrers diagram (also called Young diagram) is the diagram of boxes where we have a box in position $(i, j)$ if and only if $(u_i
, v_j)$ is an edge in $G$. Ehrenborg and van Willigenburg studied enumerative aspects of Ferrers graphs in \cite{EhrenborgvanWilligenburg2004}.  In \cite{DukeSeligSmithSteingrimsson2019} the authors studied (minimal) recurrent configurations of Ferrers graphs using decorated EW-tableaux. As explained for example in \cite{CorryPerkinson2018}, recurrent configurations of any graph $G$ are in a simple duality with superstable configurations of $G$, which by results of Merino \cite{Merino2001} form a multicomplex whose $f$-vector recovers the $h$-vector of the dual matroid ${\mathcal M}(G)^*$.  In particular the number of recurrent configurations of $G$ of a particular degree are given by the $h$-vector of ${\mathcal M}(G)^*$.  Recurrent and superstable configurations are objects of chip-firing, for more details we refer to \cite{CorryPerkinson2018}. Our work provide an interpretation for the $h$-vector of the primal matroid ${\mathcal M}(G)$, is there a relationship between our constructions and chip-firing? Also, Ferrers graphs can be obtained by taking the biconing of other Ferrers graphs, so is there any recursive structure that can be taken advantage of when investigating $h$-vectors?

\begin{question}
Does the set of $2$-edge rooted forests in a biconed graph lead to a basis for the homology of the matroid independence complex?

\end{question}

For any graph $G$ the independence complex of its graphic matroid $I({\mathcal M}(G))$ is a wedge of spheres of dimension $n-2$ (where $n$ is the number of vertices).  The number of spheres in this wedge is given by the M\"obius coninvariant $\mu^\perp(G)$. In the case of biconed graphs, we have seen that this number is given by the number of maximal $2$-edge rooted forests. Hence a natural question to ask if one can associate a $2$-edge rooted forest with a fundamental cycle in $\tilde H(I({\mathcal M}(G)), {\mathbb R})$, to obtain a basis for this vector space.  Furthermore, the automorphism group $\text{Aut}(G)$ of the graph $G$ acts on this vector space, and one can perhaps use such a basis to study this representation.  This was worked out for the case of coned graphs in \cite{Kook2007hom} and for complete bipartite graphs in \cite{KookLee2018}.

\begin{question} \label{n-coned}
Can we generalize our constructions to $n$-coned graphs?
\end{question}

Generalizing the construction of biconed graphs we can construct \emph{$n$-coned graphs} $G^{U_1,U_2,\dots U_n}=(V(G),E(G))$ by taking a graph and connecting each of its vertices to at least one of $n$ coning points. The internal activity of edges in $n$-coned graphs seems to parallel that of biconed graphs, so $k$-weighted forests or another weighted forest structure may lead to pure multicomplexes. Does our multicomplex structure extend in a natural way to more cone vertices? What requirements do we need on the edges between the coning points so that we obtain a pure multicomplex?

\begin{question} \label{radius 2}
Does Stanley's Conjecture hold in the case of matroids of radius $2$ graphs?
\end{question}
The \emph{eccentricity} of a vertex $v$ in a connected graph $G$ is the number of edges between it and the vertex farthest (with respect to edges) from it. The \emph{radius} of $G$ is the minimum eccentricity of its vertices. Biconed graphs are special cases of radius $2$ graphs since the eccentricity of both coning vertices is at most $2$. This generalizes the class of coned graphs, which are exactly the graphs of radius $1$. Stanley's Conjecture has been easiest to prove for graphs which are well-connected -- roughly speaking, graphs whose ratio of cardinality of edges to cardinality of vertices is high -- so radius $2$ graphs may be a reasonable next step.

\begin{question} \label{span trees}
Can we bound the number of spanning trees of a biconed bipartite graph?
\end{question}
Let $G$ be a bipartite graph with $m$ vertices on one side and $n$ vertices on the other side, with vertex degrees $d_1, d_2, \dots, d_m$ and $e_1, e_2, \dots, e_n$. Is it true that the number of spanning trees of $G$ is at most
$$
m^{-1}n^{-1}\prod_{i=1}^m d_i \prod_{j=1}^n e_j ?
$$
Ehrenborg and van Willigenburg proved this for Ferrer's graphs, where in fact equality is achieved \cite{EhrenborgvanWilligenburg2004}. Klee and Stamps give a linear algebraic approach for weighted graphs using the Weighted Matrix-Tree Theorem in \cite{KleeStampsWeight2019}. They use a similar linear algebraic approach for unweighted graphs using Lapacian matrices and Kirchhoff's Matrix-Tree Theorem in \cite{KleeStampsUnweight2019}.

\section*{Acknowledgements}

This research was primarily conducted under NSF-REU grant DMS-1757233 during the Summer 2019 Mathematics REU at Texas State University. The authors gratefully acknowledge the financial support of NSF and also thank Texas State for providing support and a great working environment. We also thank the anonymous referees for helpful comments that have lead to substantial improvements, and in particular for pointing out the connection to the work of Klee and Samper.  This also lead to some streamlining of our arguments.

\bibliography{Biconed_final}

\begin{thebibliography}{10}

\bibitem{Bjorner1992}
{\sc A.~Bj\"{o}rner}, {\em Homology and shellability of matroids and geometric
  lattices}, in Matroid Applications, Encyclopedia of Mathematics and its
  Applications, Cambridge University Press, 1992, pp.~226--283.

\bibitem{BrylawskiOxley1992}
{\sc T.~Brylawski and J.~Oxley}, {\em The {T}utte polynomial and its
  applications}, in Matroid applications, vol.~40 of Encyclopedia Math. Appl.,
  Cambridge Univ. Press, Cambridge, 1992, pp.~123--225.

\bibitem{ConKahVar}
{\sc A.~Constantinescu, T.~Kahle, and M.~Varbaro}, {\em Generic and special
  constructions of pure {$O$}-sequences}, Bull. Lond. Math. Soc., 46 (2014),
  pp.~924--942.

\bibitem{CorryPerkinson2018}
{\sc S.~Corry and D.~Perkinson}, {\em Divisors and Sandpiles: An Introduction
  to Chip-Firing}, AMS Non-Series Monographs, no. 114, American Mathematical
  Society, 2018.

\bibitem{Dal}
{\sc A.~Dall}, {\em Internally perfect matroids}, Electron. J. Combin., 24
  (2017), pp.~Paper No. 2.35, 31.

\bibitem{DelKemKle}
{\sc J.~A. De~Loera, Y.~Kemper, and S.~Klee}, {\em {$h$}-vectors of small
  matroid complexes}, Electron. J. Combin., 19 (2012), pp.~Paper 14, 11.

\bibitem{DukeSeligSmithSteingrimsson2019}
{\sc M.~Dukes, T.~Selig, J.~Smith, and E.~Steingr\'imsson}, {\em The abelian
  sandpile on ferrers graphs - a classification of reccurent configurations},
  European Journal of Combinatorics, 81 (2019), pp.~221--241.

\bibitem{EhrenborgvanWilligenburg2004}
{\sc R.~Ehrenborg and S.~van Willigenburg}, {\em Enumerative properties of
  ferrers graphs}, Discrete and Computational Geometry, 32 (2004),
  pp.~481--492.

\bibitem{HaStokesZanello2013}
{\sc H.~T. H\'a, E.~Stokes, and F.~Zanello}, {\em Pure {O}-sequences and
  matroid h-vectors}, Annals of Combinatorics, 17 (2013), pp.~495--508.

\bibitem{HeaSam}
{\sc A.~Heaton and J.~A. Samper}, {\em Dual matroid polytopes and internal
  activity of independence complexes}, 2020.

\bibitem{KleNug}
{\sc S.~Klee and B.~Nugent}, {\em Pure {$\mathcal O$}-sequences arising from
  2-dimensional {PS} ear-decomposable simplicial complexes}, Electron. J.
  Combin., 27 (2020), pp.~Paper No. 3.17, 27.

\bibitem{KleeSamper2015}
{\sc S.~Klee and J.~A. Samper}, {\em Lexicographic shellability, matroids, and
  pure order ideals}, Advances in Applied Mathematics, 67 (2015), pp.~1--19.

\bibitem{KleeStampsWeight2019}
{\sc S.~Klee and M.~Stamps}, {\em Linear algebraic techniques for weighted
  spanning tree enumeration}, Linear Algebra and its Applications, 582 (2019),
  pp.~391--402.

\bibitem{KleeStampsUnweight2019}
\leavevmode\vrule height 2pt depth -1.6pt width 23pt, {\em Linear algebraic
  techniques for spanning tree enumeration}, The American Mathematical Monthly,
  127 (2020).

\bibitem{Kook2007hom}
{\sc W.~Kook}, {\em The homology of the cycle matroid of a coned graph},
  European Journal of Combinatorics, 28 (2007), pp.~734--741.

\bibitem{Kook2012}
\leavevmode\vrule height 2pt depth -1.6pt width 23pt, {\em A multicomplex of
  partially edge-rooted forests}, European Journal of Combinatorics, 33 (2012),
  pp.~505--509.

\bibitem{KookLee2018}
{\sc W.~Kook and K.~Lee}, {\em M{\"o}bius coinvariants and bipartite
  edge-rooted forests}, European Journal of Combinatorics, 71 (2018),
  pp.~180--193.

\bibitem{Las}
{\sc M.~Las~Vergnas}, {\em Active orders for matroid bases}, European J.
  Combin., 22 (2001), pp.~709--721.
\newblock Combinatorial geometries (Luminy, 1999).

\bibitem{Merino2001}
{\sc C.~Merino}, {\em The chip firing game and matroid complexes}, in Discrete
  Models: Combinatorics, Computation, and Geometry, DM-CCG 2001, vol.~AA, 2001,
  pp.~245--256.

\bibitem{MNRV2012}
{\sc C.~Merino, S.~D. Noble, M.~Ram\'irez-Ib\'a{\~n}ez, and
  R.~Villarroel-Flores}, {\em On the structure of the $h$-vector of a paving
  matroid}, European Journal of Combinatorics, 33 (2012), pp.~1787--1799.

\bibitem{NovPosStu2002}
{\sc I.~Novik, A.~Postnikov, and B.~Sturmfels}, {\em Syzygies of oriented
  matroids}, Duke Mathematical Journal, 111 (2002).

\bibitem{Oh2013}
{\sc S.~Oh}, {\em Generalized permutohedra, h-vectors of cotransversal matroids
  and pure {O}-sequences}, The Electronic Journal of Combinatorics, 20 (2013).

\bibitem{Sar}
{\sc C.~Sarmiento}, {\em On power ideals of transversal matroids and their
  ``parking functions''}, Algebraic Combinatorics, 2 (2019), pp.~573--583.

\bibitem{Schweig2010}
{\sc J.~Schweig}, {\em On the h-vector of a lattice path matroid}, The
  Electronic Journal of Combinatorics, 17 (2010).

\bibitem{Stanley1977}
{\sc R.~Stanley}, {\em Cohen-Macaulay Complexes}, vol.~31 of Higher
  Combinatorics, D. Reidel Publishing Company, 1977.

\end{thebibliography}
\bibliographystyle{siam}

\end{document}